\numberwithin{equation}{section}
\renewcommand{\thetheoremName}
\def\Z{\mathbf{Z}}
\def\G{\mathbb{G}}
\def\Q{\mathbf{Q}}
\def\F{\mathbf{F}}
\def\R{\mathbf{R}}
\def\C{\mathbf{C}}
\def\sE{\mathbf{\widetilde{E}}}
\def\sA{\mathbf{\widetilde{A}}}
\def\sB{\mathbf{\widetilde{B}}}
\newtheorem{theorem}{Theorem}[section]
\newtheorem*{theorem*}{Main Theorem}
\newtheorem{df}[theorem]{Definition}
\newtheorem{lemma}[theorem]{Lemma}
\newtheorem{prop}[theorem]{Proposition}
\newtheorem{remark}[theorem]{Remark}
\newtheorem{assumption}[theorem]{Assumption}
\newtheorem*{Lubin}{Lubin's Conjecture}
\begin{document}

\begin{abstract} Let ~$f$ be a continuous ring endomorphism of ~$\mathbf{Z}_p\llbracket      x\rrbracket/\mathbf{Z}_p$ of degree ~$p.$ We prove that if ~$f$ acts on the tangent space at ~$0$ by a uniformizer and commutes with an automorphism of infinite order, then it is necessarily an endomorphism of a formal group over ~$\mathbf{Z}_p.$ The proof relies on finding a stable embedding of ~$\mathbf{Z}_p\llbracket      x\rrbracket$ in Fontaine's crystalline period ring with the property that ~$f$ appears in the monoid of endomorphisms generated by the Galois group of ~$\mathbf{Q}_p$ and crystalline Frobenius. Our result verifies, over ~$\mathbf{Z}_p,$ the height one case of a conjecture by Lubin.       
\end{abstract}

\title{The Crystalline Period of a Height One ~$p$-Adic Dynamical System over ~$\Z_p$}

\subjclass[2010]{11S20, 11S31, 11S82, 14L05, 13F25, 14F30.}
\author{Joel Specter}\thanks{The author was supported in part by National Science Foundation Grant DMS-1404620 and by a National Science Foundation Graduate Research Fellowship under Grant No. DGE-1324585.}
\address{Joel Specter, Department of Mathematics, Northwestern University, 2033 Sheridan Road, Evanston, IL 60208, United States}
\email{jspecter@math.northwestern.edu}

\maketitle

\section{Introduction}  

Given a formal group law ~$F/ \Z_p$ the endomorphism ring of ~$F$ provides an example of a nontrivial family of power series over ~$\Z_p$ which commute under composition. This paper investigates the question of the converse: under what conditions do families (in our case pairs) of commuting power series arise as endomorphisms of an integral formal group?  Let ~$p$ be a prime number, ~$\C_p$ be the completion of an algebraic closure of ~$\Q_p$, and ~$\mathfrak{m}_{\C_p} \subset \C_p$ be the open unit disk. Define ~$S_0(\Z_p)$ to be the set of formal power series over ~$\Z_p$ without constant term.\footnote{In general, for any ring ~$R,$ we define ~$S_0(R)$ to to be the set formal of power series over ~$R$ without constant term.} Substitution defines a composition law on ~$S_0(\Z_p)$ under which it is isomorphic to the monoid of endomorphisms of the formal ~$p$-adic unit disk which fix ~$0.$

\begin{theorem*} Let ~$f$ and ~$u$ be a commuting pair of elements in ~$S_0(\Z_p)$. If
\begin{itemize}
\item ~$f'(0)$ is prime in ~$\Z_p$ and ~$f$ has exactly ~$p$ roots in ~$\mathfrak{m}_{\C_p},$
\item ~$u$ is invertible and has infinite order,
\end{itemize} 
then there exists a unique formal group law ~$F/\Z_p$ such that ~$f,u \in \mathrm{End}_{\Z_p}(F).$ The formal group law ~$F$ is isomorphic to ~$\widehat{\G}_m$ over the ring of integers of the maximal unramified extension of ~$\Q_p.$       
\end{theorem*}

The study of analytic endomorphisms of the ~$p$-adic disk was initiated by Lubin in \cite{L}. There he showed that if ~$f\in S_0(\Z_p)$ is a non-invertible transformation which is nonzero modulo ~$p$ and ~$u\in S_0(\Z_p)$ is an invertible, non-torsion transformation, then assuming ~$f$ and ~$u$ commute:

\begin{enumerate}
\item\label{itm:roots} The number of roots of ~$f$ in ~$\mathfrak{m}_{\C_p}$ is a power of ~$p$ (cf. \cite[Main Theorem 6.3, p. 343]{L}). 
\item\label{itm:reduction} The reduction of ~$f$ modulo ~$p$ is of the form ~$a(x^{p^h})$ where ~$a \in S_0(\F_p)$ is invertible (cf. \cite[Corollary 6.2.1, p. 343]{L}).\footnote{The analogous statements both hold when ~$\Z_p$ is replaced by the ring of integers in a finite extension of ~$\Q_p.$}  
\end{enumerate}

Both (\ref{itm:roots}) and (\ref{itm:reduction}) are well known properties of a non-invertible endomorphism of a formal group law over ~$\Z_p$. In light of this, and knowing: 

\begin{enumerate}[resume]
\item Every non-torsion, invertible power series ~$\mathfrak{u}\in S_0(\Q_p)$ is an endomorphism of a unique one dimensional formal group law ~$F_\mathfrak{u}/\Q_p.$ If ~$\mathfrak{f} \in S_0(\Q_p)$ commutes with ~$\mathfrak{u}$ then ~$\mathfrak{f}\in \mathrm{End}_{\Q_p}(F_\mathfrak{u}),$ 
\end{enumerate}

\noindent one might preliminarily conjecture that the formal group ~$F_u$ associated to ~$u$ is defined over ~$\Z_p.$ This is false and counterexamples have been constructed by Lubin ~\cite[p. 344]{L} and Li ~\cite[p. 86-87]{Li2}. Instead, Lubin hypothesizes  that ``for an invertible series to commute with a non-invertible series, there must be a formal group somehow in the background'' \cite[p. 341]{L}. Interestingly, all known counterexamples are constructed from the initial data of a formal group defined over a finite extension of ~$\Z_p$ and therefore conform to Lubin's philosophy. Despite this, the ``somehow'' in Lubin's statement has yet to be made precise. 

Result (\ref{itm:reduction}) is reminiscent of a hypothesis in the following integrality criterion of Lubin and Tate:    

\begin{enumerate}[resume]
\item \label{itm:Lubin-Tate} Let ~$\mathfrak{f} \in S_0(\Z_p).$ Assume ~$\mathfrak{f}'(0)$ is prime in ~$\Z_p$ and ~$\mathfrak{f}(x) \equiv x^{p^h} \mod p,$ then there is a unique formal group law ~$F_{\mathfrak{f}}/\Z_p$ such that ~$\mathfrak{f} \in \mathrm{End}_{\Z_p}(F)$ (cf. \cite[Lemma 1, p. 381-2]{LT}).
\end{enumerate}

\noindent Perhaps based on this and the counterexamples of \cite{Li2} and \cite{L}, Lubin has offered the following conjecture as to when our preliminary intuition should hold true: 

\begin{Lubin} \cite[p. 131]{S1} Suppose that ~$\mathfrak{f}$ and ~$\mathfrak{u}$ are a non-invertible and a non-torsion invertible series, respectively, defined over the ring of integers ~$\mathcal{O}$ of a finite extension of ~$\Q_p.$ Suppose
further that the roots of ~$\mathfrak{f}$ and all of its iterates are simple, and that
$\mathfrak{f}'(0)$ is a uniformizer in ~$\mathcal{O}$. If ~$\mathfrak{f}\circ \mathfrak{u} = \mathfrak{u} \circ \mathfrak{f}$ then ~$\mathfrak{f}, \mathfrak{u} \in \mathrm{End}_{\mathcal{O}}(F)$ for some formal group law ~$F/\mathcal{O}.$
\end{Lubin}

For each integral extension of ~$\Z_p,$ Lubin's conjecture is naturally divided into cases: one for each possible height  of the commuting pair ~$\mathfrak{f}$ and ~$\mathfrak{u}.$ This note serves as a complete solution to the height one case of Lubin's conjecture over ~$\Z_p.$ Previous results towards Lubin's conjecture (\cite{LMS}, \cite{S}, \cite{SS}) have used the field of norms equivalence to prove some special classes of height one commuting pairs over ~$\Z_p$ arise as endomorphisms of integral formal groups. All other cases of Lubin's conjecture, i.e. those over proper integral extensions of ~$\Z_p$ or over ~$\Z_p$ with height greater than one, are completely open. 

We call a formal group Lubin-Tate (over ~$\Z_p$) if it admits an integral endomorphism ~$\mathfrak{f}$ satisfying the hypotheses of (\ref{itm:Lubin-Tate}). All height one formal groups over ~$\Z_p$ are Lubin-Tate. Therefore, given the height one commuting pair ~$(f,u),$ one strategy to prove that the formal group ~$F_u$ is integral is to identify the Lubin-Tate endomorphism of ~$F_u$ and appeal to result (\ref{itm:Lubin-Tate}). This is the approach we employ in this paper.

Before we present an outline of the proof, we remind the reader of some structures which exist in the presence of a formal group. Let ~$F$ be a height one formal group over ~$\Z_p.$ Then ~$F$ is Lubin-Tate, so there exists a unique endomorphism ~$\mathfrak{f}$ of ~$F$ such that ~$\mathfrak{f}(x)\equiv x^p \mod p.$ Consider the ~$\mathfrak{f}$-adic Tate module  ~$T_{\mathfrak{f}}F.$ The absolute Galois group of ~$\Q_p$ acts on ~$T_{\mathfrak{f}}F$ through a character ~$\chi_F:G_{\Q_p} \rightarrow \Z_p^{\times}.$ 

The character ~$\chi_F$ is crystalline. We sketch a proof of this fact following \cite{KR}. Let ~$\mathcal{O}_{\C_p}$ be the integral closure of ~$\Z_p$ in ~$\C_p$ and ~$\sE^+ := \varprojlim \mathcal{O}_{\C_p}/p,$ where the limit is taken with respect to the ~$p$-power Frobenius map.  Because ~$\mathfrak{f}(x) \equiv x^p \mod p,$ elements of ~$T_{\mathfrak{f}}$ are canonically identified with elements of ~$\sE^+.$ Let ~$v$ be a generator of ~$T_{\mathfrak{f}}.$  There is a unique element ~$\widehat{v} \in \sA^+ := W(\sE^+),$ the Witt vectors of ~$\sE^+,$ which lifts ~$v$ and with the property that the action of ~$G_{\Q_p}$ and the Frobenius endomorphism ~$\varphi$ satisfy:
\begin{enumerate}[label=(\Alph*)]
\item\label{itm:property A} ~$g(\widehat{v}) = [\chi_F(g)]_F(\widehat{v})$ for all ~$g \in G_{\Q_p},$
\item\label{itm:property B} ~$\varphi(\widehat{v}) = \mathfrak{f}(\widehat{v})$ \cite[Lemma~1.2]{KR}.   
\end{enumerate}
The logarithm of ~$F$ evaluated at ~$\widehat{v}$ converges in Fontaine's crystalline period ring ~$\mathbf{B}_{cris}.$ The resulting period ~$\log_F(\widehat{v})$ spans a ~$G_{\Q_p} \times \langle \varphi \rangle$-stable ~$\Q_p$-line on which ~$G_{\Q_p}$ acts through ~$\chi_F.$ 
 
The element ~$\widehat{v} \in \mathbf{B}_{cris}$ is transcendental over ~$\Z_p$ and therefore property \ref{itm:property B} implies that the Lubin-Tate endomorphism ~$\mathfrak{f}$ can be recovered from the action of Frobenius on ~$\widehat{v}.$ Inspired by this observation, we construct in this note an appropriate substitute for ~$\widehat{v}$ which is directly accessible from the commuting pair ~$(f,u).$ The argument is as follows. In section \ref{sect: log and roots}, we recall some preliminary facts concerning the commutative height one ~$p$-adic dynamical system ~$(f,u)$. In particular, we recall the \textit{logarithm}, ~$\log_f,$ of the commuting pair. Next, in section \ref{sect: a character} we attach to ~$(f,u)$ a character ~$\chi_f:G_K\rightarrow u'(0)^{\Z_p}\subseteq \Z_p^{\times}$ where ~$K$ is some particular finite extension of ~$\Q_p.$ The character ~$\chi_f$ is (\textit{a posteriori}) a restriction of the character attached to the Tate module of the formal group for which ~$f$ and ~$u$ are endomorphisms. Then, in section ~\ref{sect: pHT}, we show the character ~$\chi_f$ is crystalline of weight one. This is achieved by constructing an element ~$x_0 \in \sA^+$ such that ~$[\chi_f(g)]_f(x_0) = g(x_0),$ where ~$[\chi_f(g)]_f$  is the unique ~$\Z_p$-iterate of ~$u$ with linear term ~$\chi_f(g).$ We show the logarithm converges in ~$\mathbf{A}_{cris}$ when evaluated at ~$x_0$ and the resulting period ~$t_f := \log_f(x_0)$ generates a ~$G_{K}$-stable ~$\Q_p$-line ~$V_f$ of exact filtration one. Multiplicity one guarantees ~$V_f$ is stable under crystalline Frobenius. Taking ~$\pi_f$ to denote the crystalline Frobenius eigenvalue on ~$V_f,$ we show ~$[\pi_f]_f,$ the multiplication by ~$\pi_f$  endomorphism of ~$F_u,$ converges when evaluated at ~$x_0$ and satisfies  ~$[\pi_f]_f(x_0) = \varphi(x_0).$ The proof is concluded by showing ~$[\pi_f]_f$ is a Lubin-Tate endomorphism of ~$F_u$ and therefore ~$F_u$ is defined over ~$\Z_p$ by ~$(\ref{itm:Lubin-Tate}).$   

\begin{remark} \emph{ There is a recent preprint of Berger \cite{B3} which uses similar methods to study certain iterate extensions. Berger's work is related to the phenomena which occur when the hypotheses of Lubin and Tate's result ~$(\ref{itm:Lubin-Tate})$ are weakened and one assumes merely that ~$\mathfrak{f}$ is a lift of Frobenius i.e. ~$\mathfrak{f}'(0)$ is not assumed to be prime. This paper can be seen as an orthogonal generalization of Lubin and Tate's hypotheses.}  
\end{remark} 

\section{The ~$p$-adic Dynamical System and Its Points} 

\subsection{The Logarithm and Roots}\label{sect: log and roots}

Throughout this paper, we will assume ~$f,u \in S_0(\Z_p)$ are a fixed pair of power series satisfying: 

\begin{assumption}\label{lubin's conditions}
Assume: \begin{itemize}
\item ~$f \circ u = u \circ f,$ 
\item ~$u$ is invertible and has infinite order, and
\item ~$v_p(f'(0)) = 1$ and ~$f$ has exactly ~$p$ roots in ~$\mathfrak{m}_{\C_p}.$
\end{itemize}
\end{assumption}

The goal of this section is to remind the reader of some of the properties of the following two structures associated by Lubin (in \cite{L}) to the commuting pair ~$(f,u).$ The first is the set of ~$f$-preiterates of ~$0$: ~$$\Lambda_f := \{\pi \in \mathfrak{m}_{\C_p}: f^{\circ n}(\pi) = 0 \text{ for some } n \in \Z_+ \}.$$ The second is the \textit{logarithm} of ~$f;$ this is the unique series ~$\log_f \in S_0(\Q_p)$ such that ~$$\log_f'(0) = 1 \text{ and } \log_f(f(x)) = f'(0)\log_f(x).$$  
In the context that ~$f$ and ~$u$ are endomorphisms of a formal group ~$F/\Z_p,$ the set ~$\Lambda_f$ is the set of ~$p^{\infty}$-torsion points of ~$F$ and ~$\log_f$ is the logarithm of ~$F.$

One essential tool for understanding a ~$p$-adic power series and its roots is its Newton polygon. Given a series, ~$$g(x) := \sum_{i=0}^{\infty} a_ix^i  \in \C_p\llbracket      x\rrbracket,$$ the Newton polygon ~$\mathcal{N}(g)$ of ~$g$ is the convex hull in the ~$(v,w)$-Cartesian plane, ~$\R^2,$ of the set of vertical rays extending upwards from the points ~$(i,v_p(a_i)).$ Let ~$pr_1:\R^2 \rightarrow \R$ denote the projection map to the first coordinate. The boundary of ~$\mathcal{N}(g)$ is the image of an almost everywhere defined piecewise linear function ~$B_g: pr_1(\mathcal{N}(g)) \rightarrow \R^2.$ The derivative of ~$B_g$ is almost everywhere defined and increasing.  The points of the boundary of ~$\mathcal{N}(g)$ where the slope jumps are called the vertices of ~$\mathcal{N}(g),$ whereas the maximal connected components of the boundary of ~$\mathcal{N}(g)$ where the slope is constant are called the segments. The width of a segment is the length of its image under ~$\pi_1.$ The following data can be read off from the Newton polygon of ~$g:$      

\begin{theorem}\label{convergence} The radius of the maximal open disk in ~$\C_p$ centered at ~$0$ on which ~$g$ converges is equal to ~$\displaystyle\lim_{v \rightarrow \infty} p^{\frac{dB_g}{dv}}.$ 
\end{theorem}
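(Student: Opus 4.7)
My plan is to read off convergence directly from the data of the Newton polygon by comparing $v_p(a_i) + i\cdot v_p(x)$ to the boundary function $B_g$ at large $v$. Let me write $B_g(v) = (v, h_g(v))$ so that $\frac{dB_g}{dv}$ is just the ordinary derivative $h_g'(v)$, and set $s := \lim_{v\to\infty} h_g'(v)$; this limit exists in $(-\infty,+\infty]$ because $h_g'$ is nondecreasing. The claim is that the maximal open disk of convergence has radius $p^{s}$, which for $s = +\infty$ is interpreted as $\C_p$ itself (and corresponds exactly to the polynomial case, where the Newton polygon only has finitely many vertices).

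First I would handle convergence. Fix $x \in \C_p$ with $v_p(x) = t$. Since $(i, v_p(a_i))$ lies on or above the Newton polygon, we have $v_p(a_i) \geq h_g(i)$ whenever $i \in pr_1(\mathcal N(g))$, and trivially $a_i = 0$ otherwise. By the mean value theorem applied to the piecewise linear function $h_g$, for any $\epsilon > 0$ there exists $v_0$ with $h_g'(v) \geq s - \epsilon$ for all $v \geq v_0$, so $h_g(i) \geq (s-\epsilon) i + C$ for some constant $C$ and all $i$ large. Then
\[
v_p(a_i x^i) = v_p(a_i) + it \geq (s-\epsilon+t)\, i + C.
\]
If $t > -s$, choose $\epsilon$ with $s - \epsilon + t > 0$ and conclude $v_p(a_i x^i) \to \infty$, so $g(x)$ converges.

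Next I would show divergence outside this disk. Suppose $t < -s$, so $t < -s + \delta$ for some $\delta > 0$ (and $s$ is finite in this regime, since otherwise no such $t$ exists). Because $h_g'(v) \nearrow s$, there are arbitrarily large first coordinates $v$ at which $h_g'(v) \leq s$, and hence infinitely many vertices $i_k$ of $\mathcal N(g)$ at which $v_p(a_{i_k}) = h_g(i_k) \leq s\, i_k + C'$. For these indices,
\[
v_p(a_{i_k} x^{i_k}) \leq (s + t)\, i_k + C' \longrightarrow -\infty,
\]
so the terms of $g(x)$ are unbounded and $g(x)$ diverges.

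The two inequalities together show that $g$ converges precisely on the open disk $\{|x| < p^s\}$, proving the theorem. The only step with any real content is showing that infinitely many vertices $i_k$ witness the equality $v_p(a_{i_k}) = h_g(i_k)$ with slope tending to $s$; this is where one must use the definition of $\mathcal N(g)$ as a convex hull of rays, since a priori the infimum defining $s$ could be approached only through interior points of segments. I would address this by noting that between consecutive vertices $h_g$ is linear with slope equal to the realized jump in $v_p$, and the only way $h_g'$ can climb to $s$ is through such vertex-to-vertex segments, so vertices $i_k \to \infty$ with $h_g(i_k)/i_k \to s$ must exist whenever $s < \infty$.
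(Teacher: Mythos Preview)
The paper does not actually prove this statement; immediately after listing it together with the companion facts on roots and Weierstrass factorization, the author writes ``For proofs of these statements we refer the reader to \cite{K}.'' So there is no in-paper argument to compare yours against, and your attempt must be judged on its own.

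Your approach is the standard one and the convergence half is correct. For the divergence half there is a small but genuine gap precisely where you flag it. You assert that when $s<\infty$ there must be infinitely many \emph{vertices} $i_k\to\infty$ with $v_p(a_{i_k})=h_g(i_k)$, but this can fail: the polygon may end in a single infinite ray of slope $s$ on which no coefficient point beyond the initial vertex actually lies. Take for instance $g(x)=1+\sum_{i\ge 1}p^{\,i+1}x^i$: here $h_g(v)=v$, so $s=1$, yet $(0,0)$ is the only coefficient point on the boundary and the only vertex. Your sentence ``the only way $h_g'$ can climb to $s$ is through vertex-to-vertex segments'' overlooks the possibility that $h_g'$ need not climb at all---it may equal $s$ from some finite stage on. (Relatedly, your parenthetical that $s=+\infty$ ``corresponds exactly to the polynomial case'' is not quite right: entire non-polynomial series such as $\sum p^{i^2}x^i$ also have $s=+\infty$, and conversely finitely many vertices does not force $s=+\infty$, as the example above shows.)

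The clean repair is to abandon vertices and use the identity $s=\liminf_{i\to\infty} v_p(a_i)/i$, which is exactly what the asymptotic slope of the lower convex hull computes. Then by definition of $\liminf$ there is a subsequence $i_k\to\infty$ with $v_p(a_{i_k})\le (s+\epsilon)i_k$, and for $t<-s$ and $\epsilon$ chosen so that $s+\epsilon+t<0$ one obtains $v_p(a_{i_k}x^{i_k})\le (s+\epsilon+t)i_k\to -\infty$, giving divergence.
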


\begin{theorem} The series ~$g$ has a root in its maximal open disk of convergence of valuation ~$\lambda$ if and only if the Newton polygon of ~$g$ has a segment of slope ~$-\lambda$ of finite, positive width. The width of this slope is equal to the number of roots of ~$g$ (counting multiplicity) of valuation ~$\lambda.$    
\end{theorem}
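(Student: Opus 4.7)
This is the classical Newton polygon theorem for $p$-adic power series. My plan is to handle the polynomial case directly via symmetric function identities, and then reduce the general case to polynomials by invoking $p$-adic Weierstrass preparation on a closed disk of radius $p^{-\lambda}$.

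\textbf{Polynomial case.} Write $g(x) = a_n \prod_{i=1}^n (x - \alpha_i)$ with roots ordered so that $v_p(\alpha_1) \leq \cdots \leq v_p(\alpha_n)$. The identity $a_{n-k}/a_n = (-1)^k e_k(\alpha_1, \ldots, \alpha_n)$ together with the ultrametric inequality yields $v_p(a_{n-k}) - v_p(a_n) \geq v_p(\alpha_1) + \cdots + v_p(\alpha_k)$, with equality precisely at each $k$ where $v_p(\alpha_k) < v_p(\alpha_{k+1})$. These ``jump'' indices are exactly the vertices of $\mathcal{N}(g)$, and a block of $w$ roots of common valuation $\lambda$ contributes a segment of slope $-\lambda$ and width $w$.

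\textbf{Power series case.} Suppose $\mathcal{N}(g)$ contains a segment of slope $-\lambda$ of finite, positive width $w$, with right endpoint at index $N$. Since $dB_g/dv$ is increasing and strictly exceeds $-\lambda$ for $v > N$, Theorem \ref{convergence} implies that $g$ converges on the closed disk of radius $p^{-\lambda}$. On that disk, the quantity $|a_i| p^{-i\lambda}$ attains its maximum precisely on the indices lying on the slope-$(-\lambda)$ segment, with rightmost attaining index $N$. Weierstrass preparation (stated purely in terms of the coefficients, with no reference to roots) then factors $g = P \cdot h$, where $P \in \C_p[x]$ has degree $N$ and $h$ is a unit on the closed disk of radius $p^{-\lambda}$. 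Comparing coefficients shows $\mathcal{N}(P)$ coincides with the portion of $\mathcal{N}(g)$ from $0$ to $N$, and since $h$ has no roots on the closed disk, $g$ and $P$ share the same roots of valuation $\geq \lambda$. Applying the polynomial case to $P$ then yields exactly $w$ roots of valuation $\lambda$, all in the open disk of convergence of $g$. The converse direction is handled in the same breath: if $g$ has any root of valuation $\lambda$ in its disk of convergence, that root lies in the closed disk of radius $p^{-\lambda}$, hence is a root of the Weierstrass polynomial $P$, forcing $\mathcal{N}(P)$ (and therefore $\mathcal{N}(g)$) to contain a segment of slope $-\lambda$ of positive width.

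\textbf{Main obstacle.} The delicate step is choosing the correct degree $N$ for Weierstrass preparation. The right choice is dictated by the Newton polygon: $N$ is the rightmost index at which $|a_i|p^{-i\lambda}$ is maximized, i.e., the right endpoint of the slope-$(-\lambda)$ segment. With this choice, the Weierstrass quotient $h$ is automatically a unit on the closed disk, since for $i > N$ the quantity $v_p(a_i) + i\lambda$ is strictly greater than its minimum along the segment — a direct consequence of the increasing-slopes property of $B_g$. Everything else reduces to standard convex-geometric bookkeeping between $\mathcal{N}(g)$, $\mathcal{N}(P)$, and the polynomial case.
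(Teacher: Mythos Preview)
The paper does not prove this statement at all: immediately after stating it (together with Theorems~\ref{convergence} and~\ref{WP}), the author writes ``For proofs of these statements we refer the reader to \cite{K}.'' So there is no in-paper proof to compare against; your proposal is being measured against the standard textbook argument in Koblitz.

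Your outline is that standard argument and is essentially correct. Two small remarks. First, be careful about circularity with the paper's Theorem~\ref{WP}: as stated there, Weierstrass preparation is phrased in terms of Newton-polygon segments and roots, which is exactly what you are trying to establish. You anticipated this by invoking the coefficient-only form of Weierstrass preparation (choose $N$ maximal with $|a_N|p^{-N\lambda}$ attaining the sup), which is the right move; just make sure you state and use that version explicitly rather than the paper's. Second, the sentence ``Comparing coefficients shows $\mathcal{N}(P)$ coincides with the portion of $\mathcal{N}(g)$ from $0$ to $N$'' is doing real work and deserves a line of justification: the clean way is to use multiplicativity of Newton polygons (the segments of $\mathcal{N}(Ph)$ are the sorted concatenation of those of $\mathcal{N}(P)$ and $\mathcal{N}(h)$), together with the observation that since $h$ is a unit on the closed disk of radius $p^{-\lambda}$, every finite slope of $\mathcal{N}(h)$ exceeds $-\lambda$. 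That forces the slope-$(-\lambda)$ segment of $\mathcal{N}(g)$ to come entirely from $\mathcal{N}(P)$, after which the polynomial case finishes the count.
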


\begin{theorem}[Weierstrass Preparation Theorem]\label{WP} If the series ~$g$ is defined over a complete extension ~$L/\Q_p$ contained in ~$\C_p$ and the Newton polygon of ~$g$ has a segment of slope ~$-\lambda$ of finite, positive width then ~$g = g_1g_2$ where ~$g_1$ is a monic polynomial over ~$L$ and ~$g_2 \in L\llbracket      x\rrbracket,$ such that the roots of ~$g_1$ are exactly the roots of ~$g$ (with equal multiplicity) of valuation ~$\lambda.$ We call ~$g_1$ the Weierstrass polynomial corresponding to the slope ~$-\lambda.$      
\end{theorem}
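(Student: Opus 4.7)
The proof plan is to construct $g_1$ explicitly as the monic polynomial whose roots are the $\lambda$-valuation roots of $g$, and then define $g_2$ by formally inverting $g_1$ in $L\llbracket x\rrbracket$. First, by the preceding theorem, the segment of slope $-\lambda$ of finite positive width $d$ means $g$ has exactly $d$ roots (counted with multiplicity) $\alpha_1,\ldots,\alpha_d \in \C_p$ of valuation $\lambda$, all lying in the open disk of convergence. I would set $g_1(x) := \prod_{i=1}^{d}(x - \alpha_i)$, a monic polynomial of degree $d$ in $\C_p[x]$ whose roots are by construction the $\lambda$-valuation roots of $g$ with the correct multiplicities.

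The key step will be to show $g_1 \in L[x]$. The plan is a Galois descent: any continuous $L$-algebra automorphism $\sigma$ of $\C_p$ preserves the valuation and fixes $g$, hence permutes the finite multiset $\{\alpha_1,\ldots,\alpha_d\}$ of valuation-$\lambda$ roots of $g$. The coefficients of $g_1$ are elementary symmetric functions in the $\alpha_i$, hence fixed by $\mathrm{Aut}_{L\text{-cont}}(\C_p)$, and therefore lie in $L$ by Ax--Sen--Tate (applied to the complete field $L$). As a more elementary alternative, I might instead run a Hensel-type lift: truncate $g$ to a polynomial modulo a large power of $p$, factor the truncation over $L$ using Hensel's lemma to separate the slope $-\lambda$ from the adjacent slopes of the Newton polygon, and pass to the $p$-adic limit of the approximate factorizations.

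Given $g_1 \in L[x]$, the construction of $g_2$ is formal: since each $v_p(\alpha_i) = \lambda$ is finite, the constant term $g_1(0) = \prod(-\alpha_i)$ is a nonzero element of $L$, so $g_1$ is a unit in $L\llbracket x\rrbracket$ (write $g_1 = g_1(0)(1-h)$ with $h \in xL[x]$ and expand $g_1^{-1} = g_1(0)^{-1}\sum_{n \ge 0} h^n$, which converges in the $x$-adic topology). Setting $g_2 := g \cdot g_1^{-1} \in L\llbracket x\rrbracket$ gives $g = g_1 g_2$ as required, and since the roots of $g$ split as those of $g_1$ and those of $g_2$, the polynomial $g_1$ captures exactly the $\lambda$-valuation roots. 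I expect the main obstacle to lie in the descent $g_1 \in L[x]$: the Galois route packages the difficulty into Ax--Sen--Tate, while a fully elementary Hensel-type argument requires careful tracking of the $p$-adic valuations of the approximating factorizations and verification that the sequence converges to an $L$-rational factorization of $g$ itself.
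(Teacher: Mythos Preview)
The paper does not supply its own proof here; immediately after stating this and the two preceding Newton-polygon theorems it simply refers the reader to Koblitz. So there is no in-paper argument to compare your proposal against directly.

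On the proposal itself: your primary Galois-descent route is clean when $L$ is a finite extension of $\Q_p$ (or more generally the completion of an algebraic extension), and that already covers every instance in which the paper actually invokes this theorem. But for an \emph{arbitrary} complete subfield $L\subset\C_p$, as in the stated hypothesis, the appeal to Ax--Sen--Tate is not automatic: the standard theorem gives $\C_p^{G_L}=L$ only when $L$ is the completion of an algebraic extension of $\Q_p$, and for a general closed subfield you would have to argue separately that the group of continuous $L$-automorphisms of $\C_p$ is large enough to force the elementary symmetric functions of the $\alpha_i$ down into $L$. You flag the descent as the main obstacle, and this is precisely where it bites.

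Your alternative Hensel/successive-approximation route is essentially the classical argument, and the one you would find in Koblitz: one works entirely inside $L\llbracket x\rrbracket$, producing $g_1$ and $g_2$ by an iterative division-with-remainder construction that never leaves $L$, so no descent step is needed and the full generality comes for free. If you want the statement as written, that is the approach to commit to; the Galois shortcut is a pleasant alternative only in the algebraic case.
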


For proofs of these statements we refer the reader to \cite{K}. 

In this paper, we will be concerned with roots of power series in ~$\mathfrak{m}_{\C_p}$ and therefore focus on segments of Newton polygons of negative slope. 

\begin{prop}\label{Newton shape} The endpoints of the segments of the Newton polygon of ~$f^{\circ n}$ which have negative slope are ~$(p^k,n-k)$ for each integer ~$k$ with ~$0 \leq k \leq n.$  
\end{prop}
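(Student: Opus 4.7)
The plan is to proceed by induction on $n$, counting roots of $f^{\circ n}$ in $\mathfrak{m}_{\C_p}$ by precomposition with $f$. For the base case $n = 1$, the Weierstrass preparation theorem lets me factor $f = x \cdot Q \cdot U$ with $Q \in \Z_p[x]$ a distinguished polynomial of degree $p - 1$ and $U \in \Z_p\llbracket x\rrbracket^\times$. Since $v_p(f'(0)) = 1$ and $v_p(U(0)) = 0$, the identity $f'(0) = Q(0) U(0)$ yields $v_p(Q(0)) = 1$; all non-leading coefficients of $Q$ lie in $p\Z_p$, so the Newton polygon of $Q$ is the single segment from $(0, 1)$ to $(p - 1, 0)$. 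Multiplication by the factor $x$ shifts this to the single negative-slope segment of the Newton polygon of $f$, joining $(1, 1)$ to $(p, 0)$, matching the claim for $n = 1$.

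For the inductive step, assume the proposition holds for $f^{\circ(n-1)}$. I would factor the distinguished polynomial of $f$ over its splitting field as $P(y) = \prod_{\beta}(y - \beta)$, the product ranging over the $p$ roots of $f$ in $\mathfrak{m}_{\C_p}$. Writing $f = P U$, the key identity is
\[
f^{\circ n}(x) \;=\; U\bigl(f^{\circ(n-1)}(x)\bigr) \cdot \prod_{\beta}\bigl(f^{\circ(n-1)}(x) - \beta\bigr),
\]
in which $U(f^{\circ(n-1)}(x))$ is a unit in $\Z_p\llbracket x\rrbracket$. The roots of $f^{\circ n}$ in $\mathfrak{m}_{\C_p}$ thus consist, with multiplicity, of the union of the roots of the factors $f^{\circ(n-1)}(x) - \beta$. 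For $\beta = 0$ these are precisely the roots of $f^{\circ(n-1)}$, which by the inductive hypothesis decompose into a simple zero at $0$ and, for each $0 \leq k \leq n - 2$, exactly $p^k(p-1)$ roots of valuation $1/(p^k(p-1))$.

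For each of the $p - 1$ nonzero $\beta$'s, which have valuation $1/(p-1)$, the Newton polygon of $f^{\circ(n-1)}(x) - \beta$ has leftmost vertex $(0, 1/(p-1))$ and, by the inductive hypothesis, also contains the vertices $(p^k, n - 1 - k)$ for $0 \leq k \leq n - 1$. The inequality
\[
n - 1 - k \;\geq\; 1 \;>\; \frac{p^{n-1-k} - 1}{p^{n-1-k}(p-1)} \qquad (0 \leq k \leq n - 2)
\]
places each intermediate vertex strictly above the straight line from $(0, 1/(p-1))$ to $(p^{n-1}, 0)$; convexity of the Newton polygon of $f^{\circ(n-1)}$ then extends the same inequality to all of its interior points. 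Hence the negative-slope portion of the Newton polygon of $f^{\circ(n-1)}(x) - \beta$ is the single segment of slope $-1/(p^{n-1}(p-1))$ and width $p^{n-1}$, contributing $p^{n-1}$ roots of common valuation $1/(p^{n-1}(p-1))$.

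Combining the two cases, $f^{\circ n}$ will have, counted with multiplicity, a simple root at $0$ together with $p^k(p-1)$ roots of valuation $1/(p^k(p-1))$ for each $0 \leq k \leq n - 1$, totaling $p^n$ roots in $\mathfrak{m}_{\C_p}$. This root multiset, combined with the leftmost vertex $(1, n)$ coming from $v_p\bigl((f^{\circ n})'(0)\bigr) = n$, determines the negative-slope Newton polygon: its vertices fall precisely at $(p^k, n - k)$ for $k = 0, 1, \ldots, n$. The principal technical obstacle is the strict-above comparison for the polygon of $f^{\circ(n-1)}(x) - \beta$; without it the polygon could break into several subsegments and spoil the clean distribution of valuations. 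Once that comparison is in hand, the remainder of the argument is purely combinatorial bookkeeping of slopes and widths.
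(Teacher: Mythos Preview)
Your proof is correct and follows essentially the same inductive strategy as the paper: decompose the roots of $f^{\circ n}$ as the preimages under $f^{\circ(n-1)}$ of the $p$ roots $\beta$ of $f$, analyze the Newton polygon of $f^{\circ(n-1)}(x)-\beta$ for nonzero $\beta$ to see it is a single segment, and reassemble. The only notable presentational differences are that you work through the explicit factorization $f^{\circ n}=U(f^{\circ(n-1)})\prod_\beta(f^{\circ(n-1)}-\beta)$ and reconstruct the polygon from the root-valuation multiset, whereas the paper first pins down the extreme vertices $(1,n)$ and $(p^n,0)$ directly (via $f(x)\equiv a_p x^p \bmod (p,x^{p+1})$) and then argues geometrically that the intermediate segments must be the shift-by-one of those of $f^{\circ(n-1)}$. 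One small point to tighten in your base case: asserting that the distinguished factor $Q$ has degree exactly $p-1$ presupposes that the $p$ roots of $f$ are simple; the paper extracts this from the observation that the single negative-slope segment of $\mathcal{N}(f)$ runs from $(1,1)$ to a vertex on the $v$-axis, so its Weierstrass polynomial is Eisenstein.
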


\begin{proof} We prove the claim by induction on ~$n.$ We begin by proving the claim in the case that ~$n=1.$ Because ~$f(0) = 0$ and ~$f'(0) \neq 0,$ the Newton polygon of ~$f$ has a vertex at ~$(1,v_p(f'(0))) = (1,1).$ Since ~$f$ was assumed to have ~$p$ roots in ~$\mathfrak{m}_{\C_p},$ the initial finite slope of ~$\mathcal{N}(f)$ must be negative. Furthermore, because ~$f$ is defined over ~$\Z_p,$ the Newton polygon of ~$f$ is contained in the quadrant ~$\R_{\geq 0}^2$ and all vertices have integer coordinates. Therefore, the end of the initial segment of ~$\mathcal{N}(f)$ must have a vertex on the ~$v$-axis. By the Weierstrass Preparation Theorem, the nonzero roots of ~$f$ corresponding to this segment satisfy an Eisenstein polynomial and as such they are simple. Since ~$f$ has exactly ~$p$ roots in ~$\mathfrak{m}_{\C_p},$ the Newton polygon of ~$f$ has a vertex at ~$(p,0)$ and all nonzero roots in ~$\mathfrak{m}_{\C_p}$ are accounted for by the initial finite segment of ~$\mathcal{N}(f)$. 

Let ~$n \geq 1.$ The base case of our inductive argument already tells us a lot about ~$f^{\circ n+1}.$ For example because ~$\mathcal{N}(f)$ has ~$(0,p)$ has a vertex on the ~$v$-axis, ~$$f(x) \equiv a_px^p \mod (p, x^{p+1}),$$ where ~$a_p \in \F_p^{\times}.$  It follows ~$$f^{\circ {n+1}}(x) \equiv a_p^{n+1}x^{p^{n+1}} \mod (p, x^{p^{n+1}+1}).$$ Therefore, ~$\mathcal{N}(f^{\circ n+1})$ has a vertex at ~$(0,p^{n+1}).$ Similarly, because ~$\mathcal{N}(f)$ has a vertex at ~$(1,1)$ and ~$f$ has a fixed point at ~$0,$ the Newton polygon ~$\mathcal{N}(f^{\circ n+1})$ has a vertex at ~$(1,n+1).$ The ~$v$ coordinate of a vertex abutting any finite, negatively sloped segment of ~$\mathcal{N}(f^{\circ n+1})$ must occur between ~$1$ and ~$p^{n+1}.$      

Now assume we have shown the claim for ~$n.$ The roots of ~$f^{\circ n+1}$ in ~$\mathfrak{m}_{\C_p}$ are either roots of ~$f^{\circ n}$ or roots of ~$f^{\circ n}(x) - \pi$ where ~$\pi$ is one of the ~$p-1$ nonzero roots of ~$f.$ For any nonzero root ~$\pi$ of ~$f,$ the Newton polygon of ~$f^{\circ n}(x) - \pi$ consists a single segment of slope ~$-1/(p^{n+1} - p^n)$ and width ~$p^n.$ Thus ~$\mathcal{N}(f^{\circ {n+1}})$ contains a segment of slope ~$-1/(p^{n+1} - p^n).$ This slope is shallower than the slope of any segment of ~$\mathcal{N}(f^{\circ n})$ and therefore the segment of ~$\mathcal{N}(f^{\circ {n+1}})$ of slope ~$-1/(p^{n+1} - p^n)$ must be the final negatively sloped segment. For each nonzero root ~$\pi$ of ~$f,$ the Weierstrass polynomial corresponding to this single negatively sloped segment of ~$\mathcal{N}(f^{\circ n} (x)- \pi)$ is Eisenstein over ~$\Z_p[\pi]$ and therefore its roots are distinct. Because the nonzero roots of ~$f$ are distinct,  there are exactly ~$p^n(p-1)$ roots of ~$f^{\circ n+1}$ of valuation ~$-1/(p^{n+1} - p^n)$ and therefore ~$\mathcal{N}(f^{\circ {n+1}})$ has a vertex at ~$(1,p^n).$  

The remaining negative segments of ~$\mathcal{N}(f^{\circ {n+1}})$ must correspond to roots of ~$f^{\circ n+1}$ which are roots of ~$f^{\circ n}.$ By the inductive hypothesis, one can deduce the segments of ~$\mathcal{N}(f^{\circ n})$ of finite, negative slope have width ~$p^k - p^{k-1}$ and slope ~$-1/(p^k - p^{k-1})$ where ~$k$ runs over the integers ~$1\leq k\leq n.$ Hence, all Weierstrass polynomials of ~$f^{\circ n}$ are Eisenstein over ~$\Z_p$ and therefore all roots of ~$f^{\circ n}$ in ~$\mathfrak{m}_{\C_p}$ are simple. It follows, since every root of ~$f^{\circ n}$ in ~$\mathfrak{m}_{\C_p}$ is a root of ~$f^{\circ n+1},$ the Newton polygon ~$\mathcal{N}(f^{\circ n+1})$ contains for each segment of ~$\mathcal{N}(f^{\circ n})$ a segment of the same slope and at minimum the same width. These segments span between the vertices ~$(1,n+1)$ and ~$(1,p^n).$ The only way for this to occur is if the boundary of the Newton polygon of ~$\mathcal{N}(f^{\circ n+1})$ in this range is equal to the boundary Newton polygon of ~$\mathcal{N}(f^{\circ n})$ translated up by one in the positive ~$w$ direction. The claim follows by induction.\end{proof}

Proposition \ref{Newton shape} gives us a good understanding of the position of ~$\Lambda_f$ in the ~$p$-adic unit disc. Unpacking the data contained in the Newton polygon of ~$f^{\circ n}$ for each ~$n,$ we see that ~$\Lambda_f$ is the disjoint union of the sets 

$$C_n := \{\pi \in \Lambda_f: v_p(\pi) = \frac{1}{p^n -p^{n-1}}\}$$ 

\noindent for each ~$n \geq 1$ and ~$C_0 := \{0\}.$ For ~$n \geq 1,$ the set ~$C_n$ has cardinality ~${p^n -p^{n-1}}$ and its elements are the roots of a single Eisenstein series over ~$\Z_p.$ Each of these are expected properties of the ~$p^{\infty}$-torsion of a height one formal group over ~$\Z_p.$ In that case, the set ~$C_n$ is the set of elements of exact order ~$p^n.$ The same holds in the absence of an apparent formal group. Comparing the Newton polygons of ~$f^{\circ n}$ and ~$f^{\circ n +1},$ we note ~$C_n$ for ~$n \geq 1$ can alternatively be described as: 

$$C_n = \{x \in \mathfrak{m}_{\C_p}: f^{\circ n}(x) = 0 \text{ but } f^{\circ n-1}(x) \neq 0 \}.$$

\noindent Because ~$f$ and ~$u$ commute, the closed subgroup of ~$S_0(\Z_p)$ topologically generated by ~$u$ acts on ~$\Lambda_f$ and preserves the subsets ~$C_n.$ Simultaneously, because ~$f$ and ~$u$ are defined over ~$\Z_p,$ the Galois group ~$G_{\Q_p},$ the absolute Galois group of ~$\Q_p,$ acts on ~$\Lambda_f,$ preserves the subsets ~$C_n,$ and commutes with the action of ~$u^{\circ \Z_p}.$ In the next section, we will explore these commuting actions in more detail.        

Next we recall the \textit{logarithm} of a the commuting pair ~$(f,u).$ Given any series ~$g \in S_0(\Z_p)$ such that ~$g'(0)$ is nonzero nor equal to a root of unity there exists a unique series ~$log_g \in S_0(\Q_p)$ such that, 
\begin{enumerate}
\item ~$\log_g'(0) = 1,$ and  
\item ~$\log_g(g(x)) = g'(0)\log_g(x)$ \noindent \cite[Proposition~1.2]{L}.
\end{enumerate}
\noindent Lubin shows that two series ~$g_1,g_2 \in S_0(\Z_p)$ such that ~$g_1'(0)$ and  ~$g_2'(0)$ satisfy the above condition have equal logarithms if and only if they commute under composition   \cite[Proposition~1.3]{L}.

Consider the sequence of series in ~$\Q_p\llbracket      x\rrbracket$ whose ~$n$-th term is:
$$\frac{f^{\circ n}(x)}{f'(0)^n}.$$   Lubin shows that this sequence converges coefficient-wise to a series ~$\log_f$ satisfying: 
\begin{enumerate}
\item ~$\log_f'(0) = 1$  
\item ~$\log_f(f(x)) = f'(0)\log_f(x)$ \noindent \cite[Proposition~2.2]{L}.\footnote{One can place  a topology  on ~$\Z_p\llbracket      X\rrbracket$ induced by a set of valuations arising from Newton copolygons. This topology is strictly finer than the topology of coefficient-wise convergence.  Lubin proves that the limit defining ~$\log_f$ converges in the closure of ~$\Z_p\llbracket      X\rrbracket$ under this topology.} 
\end{enumerate} 
The limit is the logarithm of ~$(f,u).$ It is the unique series over ~$\Q_p$ with these properties \cite[Proposition~1.2]{L}.

From proposition \ref{Newton shape}, one obtains that the vertices of Newton polygon of ~$\log_f$ are ~$(p^k,-k)$ where ~$k$ ranges over the nonnegative integers. Therefore by Theorem \ref{convergence}, ~$\log_f$ converges on ~$\mathfrak{m}_{\C_p}.$ Furthermore, the Newton polygon of  ~$\log_f$ displays that ~$\log_f$ has simple roots (as each of its Weierstrass  polynomials is Eisenstein over ~$\Z_p$) and the root set of ~$\log_f$ and ~$\Lambda_f$ have the same number of points on any circle in ~$\mathfrak{m}_{\C_p}.$ One guesses that the root set of ~$\log_f$ (counting multiplicity) is ~$\Lambda_f.$ This is true and proved by Lubin \cite[Proposition~2.2]{L}.

The logarithm of ~$f$ is invertible in ~$S_0(\Q_p).$ Conjugation by ~$\log_f$ defines a continuous, injective homomorphism from the centralizer of ~$f$ in  ~$S_0(\Z_p)$ to ~$\mathrm{End}_{\Z_p}(\widehat{\G}_a).$ This map sends the power series ~$e$ to ~$e'(0)x.$      

Consider the subgroup ~$u^{\circ \Z_p} \subseteq S_0(\Z_p).$ By the previous paragraph, conjugation by ~$\log_f$ identifies this group with the closed subgroup ~$ u'(0)^{ \Z_p}$ of ~$\Z_p^{\times}.$ By assumption \ref{lubin's conditions}, this group is infinite. Replacing ~$u$ with an appropriate finite iterate, we may assume without loss of generality that:

\begin{assumption}\label{u assumption} The group  ~$u^{\circ \Z_p}$ is topologically isomorphic to ~$\Z_p$ and   ~$$v_p((1- u'(0))^m) = v_p(1- u'(0)) + v_p(m)$$ for all ~$m \in \Z_p.$
\end{assumption}






\subsection{A Character Arising from a Commuting Pair}\label{sect: a character}

Let ~$F$ be a height one formal group over ~$\Z_p.$ A fundamental invariant attached to ~$F$ is its Tate module, ~$T_pF.$ Given that ~$F$ is height one, ~$T_pF$ is a rank one ~$\Z_p$-module on which ~$G_{\Q_p}$ acts through a character ~$\chi_F:G_{\Q_p} \rightarrow \Z_p^{\times}.$ This map is surjective. 

Simultaneously, the automorphism group, ~$\mathrm{Aut}_{\Z_p}(F),$ acts on ~$T_pF$ and commutes with the action of ~$G_{\Q_p}.$ This action is through the character which sends an automorphism to multiplication by its derivative at ~$0.$ Because the automorphism group ~$\mathrm{Aut}_{\Z_p}(F)$ is isomorphic to ~$\Z_p^{\times}$ via this character, given any ~$g\in G_{\Q_p}$ there is a unique ~$s_g\in \mathrm{Aut}_{\Z_p}(F)$ such that  for all nonzero ~$v \in T_pF$ the equality ~$g.v = s_g.v$ holds. The value ~$\chi_F(g)$ can be recovered from ~$s_g$ as ~$\chi_F(g) = s'_g(0).$ 

This fact should be considered remarkable for it allows one to recover the character ~$\chi_F$ without any explicit knowledge of the additive structure of ~$T_pF.$ Rather ~$\chi_F$ can be completely deduced from the structure of the ~$G_{\Q_p}$-orbit of any nonzero element ~$v$ as a ~$G_{\Q_p} \times \mathrm{Aut}_{\Z_p}(G)$-set.    

In this section, guided by this observation, we will attach to our commuting pair ~$(f,u)$ a character ~$\chi_f$ from the Galois group of a certain finite extension ~$K$ of ~$\Q_p$ to ~$\Z_p^{\times}.$ The character ~$\chi_f$ arises from the Tate module of the latent formal group for which ~$f$ and ~$u$ are endomorphisms.    

We begin by defining sequences of elements of ~$\mathfrak{m}_{C_p}$ which will substitute for the r\^ole of elements of the Tate module of ~$F.$

\begin{df} \cite[p. 329]{L} An ~$f$-consistent sequence is a sequence of elements ~$(s_1,s_2, s_3, \ldots\ )$ of ~$\mathfrak{m}_{\C_p}$ such that ~$f(s_1) = 0$ and for ~$i>1, f(s_i) = s_{i-1}.$   
\end{df}

Denote the set of all ~$f$-consistent sequences whose first entry is nonzero by ~$\mathcal{T}_0.$

\begin{prop}\label{galois tree} The Galois group ~$G_{\Q_p}$ acts transitively on ~$\mathcal{T}_0.$
\end{prop}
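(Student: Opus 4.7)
The plan is to reduce the transitivity statement to finite levels by induction and then pass to the infinite sequence by a compactness argument in the profinite group $G_{\Q_p}$.

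First I would set up notation: for each $n\geq 1$, let $\mathcal{T}_0^{(n)}$ be the set of finite $f$-consistent sequences $(s_1,\dots,s_n)$ with $s_1\neq 0$, so that $s_i\in C_i$ in the notation preceding the statement. The proof of Proposition~\ref{Newton shape} already shows the relevant polynomials have the Eisenstein property that will drive the induction. Concretely, the nonzero roots of $f$ in $\mathfrak{m}_{\C_p}$ are the roots of an Eisenstein polynomial over $\Z_p$ (the Weierstrass polynomial attached to the unique negatively sloped segment of $\mathcal{N}(f)$), so $G_{\Q_p}$ acts transitively on $C_1=\mathcal{T}_0^{(1)}$. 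This handles the base case.

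For the inductive step, given $(s_1,\dots,s_{n+1})$ and $(t_1,\dots,t_{n+1})$ in $\mathcal{T}_0^{(n+1)}$, I would first use the inductive hypothesis to find $g\in G_{\Q_p}$ with $g(s_i)=t_i$ for $i\le n$. Then $g(s_{n+1})$ and $t_{n+1}$ are both roots of $f(x)-t_n$ lying in $\mathfrak{m}_{\C_p}$. The Newton polygon computation in the proof of Proposition~\ref{Newton shape} shows $f(x)-t_n$ has a single segment of finite negative slope, of width $p$, whose Weierstrass polynomial is Eisenstein over $\Z_p[t_n]$. Hence $G_{\Q_p(t_n)}$ acts transitively on those roots, so some $h\in G_{\Q_p(t_n)}$ sends $g(s_{n+1})$ to $t_{n+1}$. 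Because $f\in\Z_p\llbracket x\rrbracket$, any element of $G_{\Q_p(t_n)}$ automatically fixes $t_i=f^{\circ(n-i)}(t_n)$ for $i\le n$, so $hg\in G_{\Q_p}$ transports $(s_1,\dots,s_{n+1})$ to $(t_1,\dots,t_{n+1})$.

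To pass from finite-level transitivity to $\mathcal{T}_0$ itself, fix $(s_n),(t_n)\in\mathcal{T}_0$ and consider the sets
\[
G_N:=\{g\in G_{\Q_p}:g(s_n)=t_n \text{ for all } n\le N\}.
\]
Each $G_N$ is a left coset of the pointwise stabilizer of $\{t_1,\dots,t_N\}$, hence a closed nonempty subset of $G_{\Q_p}$ by the finite-level claim, and the $G_N$ are nested. Since $G_{\Q_p}$ is profinite, hence compact, the intersection $\bigcap_N G_N$ is nonempty, yielding a $g\in G_{\Q_p}$ with $g(s_n)=t_n$ for every $n$.

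The only mildly delicate point is verifying in the inductive step that $h$ can be chosen to fix all previous $t_i$; but this is automatic once one notes that the $t_i$ for $i\le n$ are polynomial expressions in $t_n$ under iteration of $f$, so $G_{\Q_p(t_n)}$ lies in the stabilizer of the entire initial segment. Everything else is a routine assembly of the Newton polygon and Weierstrass data already established.
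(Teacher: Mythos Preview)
Your argument is correct and follows the same overall shape as the paper's proof: establish transitivity of $G_{\Q_p}$ on the finite truncations $\mathcal{T}_0^{(n)}$ via Eisenstein irreducibility, then pass to the inverse limit by compactness.

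The one difference worth noting is that the paper avoids your level-by-level induction. Instead of climbing from $C_n$ to $C_{n+1}$ using that the Weierstrass polynomial of $f(x)-t_n$ is Eisenstein over $\Z_p[t_n]$, the paper observes directly that $C_n$ is precisely the root set of the Weierstrass polynomial attached to the segment of slope $-1/(p^n-p^{n-1})$ in $\mathcal{N}(f^{\circ n})$, and that this polynomial is Eisenstein over $\Z_p$ itself (Proposition~\ref{Newton shape}). Transitivity on $C_n$ then follows in one stroke, and since $s_i = f^{\circ(n-i)}(s_n)$ the $n$-th coordinate determines the whole initial segment, so transitivity on $C_n$ is already transitivity on $\mathcal{T}_0^{(n)}$. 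Your inductive step, together with the observation that $G_{\Q_p(t_n)}$ fixes $t_1,\dots,t_{n-1}$, reproves exactly this but with more moving parts. The paper also leaves the compactness passage implicit in the phrase ``it is enough,'' whereas you spell it out; your version is more complete on that point.
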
 

\begin{proof} The ~$n$-th coordinate of any ~$f$-consistent sequence in ~$\mathcal{T}_0$ is a root of ~$f^{\circ n}$ which is not a root of ~$f^{\circ n-1}.$ It is enough to show that ~$G_{\Q_p}$ acts transitively on these elements. Comparing the Newton polygons of ~$f^{\circ n}$ and ~$f^{\circ n-1},$ we deduce that the set of such elements satisfy a degree ~$p^n - p^{n-1}$ Eisenstein polynomial over ~$\Q_p.$ This polynomial is irreducible and hence ~$G_{\Q_p}$ acts transitively on its roots.     
\end{proof}

Choose an ~$f$-consistent sequence ~$\Pi = (\pi_1,\pi_2, \pi_3, \ldots\ )$ such that ~$\pi_1 \neq 0.$ The sequence ~$\Pi$ will be fixed throughout the remainder of this paper. Because ~$u \in S_0(\Z_p)$ is nontrivial, it can have only finitely many fixed points in ~$\mathfrak{m}_{\C_p}.$ Let ~$k$ be the largest integer such that ~$u(\pi_k) = \pi_k.$ As ~$u^{\circ \Z_p} \cong \Z_p$ is a ~$p$-group and there are ~$p-1$ nonzero roots of ~$f,$ the integer ~$k$ is at least ~$1.$ Let ~$\mathcal{T}_{\pi_k}$ denote the set of all ~$f$-consistent sequences ~$(s_1,s_2, s_3, \ldots\ )$ such that ~$s_i = \pi_i$ for ~$i\leq k.$  

\begin{prop}  ~$\mathcal{T}_{\pi_k}$ is a torsor for the closed subgroup of ~$S_0(\Z_p)$ generated by ~$u.$  
\end{prop}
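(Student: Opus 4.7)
Plan: I will verify three properties in turn: well-definedness of the $u^{\circ \Z_p}$-action on $\mathcal{T}_{\pi_k}$, freeness, and transitivity.

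\emph{Well-definedness} is routine. For $\alpha \in \Z_p$, the iterate $u^{\circ \alpha} \in \Z_p\llbracket x\rrbracket$ commutes with $f$ and therefore sends $f$-consistent sequences to $f$-consistent sequences. Iterated $f$-equivariance applied to $u(\pi_k)=\pi_k$ yields $u(\pi_i)=\pi_i$ for every $i\le k$, so $u^{\circ\alpha}$ fixes each such $\pi_i$ and the action preserves $\mathcal{T}_{\pi_k}$.

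\emph{Freeness} admits a clean Newton-polygon proof. Suppose $u^{\circ \alpha}$ fixes $\Pi$ with $\alpha \ne 0$. Because $u'(0)$ has infinite order, $u'(0)^\alpha \ne 1$, and Lubin's intertwining $\log_f(u^{\circ\alpha}(x)) = u'(0)^\alpha \log_f(x)$ forces every fixed point of $u^{\circ\alpha}$ in $\mathfrak{m}_{\C_p}$ to be a root of $\log_f$, hence an element of $\Lambda_f$. Combining $u^{\circ \alpha}(\pi_n)=\pi_n$ for every $n$ with Galois transitivity on each $C_n$ (as in Proposition~\ref{galois tree}) and the integrality $u^{\circ \alpha}\in \Z_p\llbracket x\rrbracket$, we upgrade this to: $u^{\circ \alpha}$ fixes all of $\Lambda_f$. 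The Newton polygon of the integral series $(u^{\circ \alpha}(x)-x)/x$ must then contain, for every $n\ge 1$, a negative-slope segment of slope $-1/(p^n-p^{n-1})$ and width $p^n-p^{n-1}$, each contributing a unit drop in height; the total drop is infinite. This contradicts the finite starting height $v_p(u'(0)^\alpha - 1)$, so $\alpha = 0$.

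\emph{Transitivity} is the main content. Given $\Sigma\in\mathcal{T}_{\pi_k}$, I will build $\alpha\in\Z_p$ with $u^{\circ\alpha}(\Pi)=\Sigma$ by a Hensel-style layer-by-layer construction producing compatible approximants $\alpha_n\in\Z_p/p^{n-k}\Z_p$ satisfying $u^{\circ\alpha_n}(\pi_n)=\sigma_n$. The inductive hypothesis is that the stabilizer of $\pi_n$ in $u^{\circ \Z_p}$ equals $p^{n-k}\Z_p$, and the inductive step requires this stabilizer to act transitively on the $p$-element fiber $f^{-1}(\pi_n)$. Its image in $\mathrm{Sym}(f^{-1}(\pi_n))\cong S_p$ is a $p$-subgroup of $S_p$, hence either trivial or generated by a $p$-cycle. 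The base case $n=k$ follows from the maximality of $k$: $u$ does not fix $\pi_{k+1}$, so the image of $u^{\circ\Z_p}$ in $S_p$ is nontrivial.

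The main obstacle is ruling out the trivial case in the inductive step. I plan to combine Assumption~\ref{u assumption} with a pair of Newton polygon analyses. If $u^{\circ p^{n-k}}$ fixed $\pi_{n+1}$, then by $f$-equivariance, Galois transitivity on each $C_i$, and the restriction of fixed points of any integral iterate of $u$ to $\Lambda_f$, it would fix all of $\bigcup_{i\le n+1} C_i$. The Newton polygon of $(u^{\circ p^{n-k}}(x)-x)/x$, whose starting height equals $v_p(1-u'(0))+(n-k)$ by Assumption~\ref{u assumption}, would then be forced to drop by at least $n+1$, yielding $v_p(1-u'(0))\ge k+1$. A parallel analysis applied to $(u(x)-x)/x$, using that $u$ fixes exactly $\{0\}\cup\bigcup_{i\le k}C_i$ (this follows from the log-restriction of fixed points to $\Lambda_f$, Galois transitivity on $C_{k+1}$, and the maximality of $k$), aims at the reverse inequality $v_p(1-u'(0))=k$. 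Together these force the desired contradiction, completing the induction and hence transitivity.
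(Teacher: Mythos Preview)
Your overall architecture is close to the paper's --- both arguments hinge on pinning down the exact stabilizer of~$\pi_n$ in~$u^{\circ\Z_p}$ and then counting --- but your transitivity step has a real gap. The first Newton-polygon inequality is fine: if~$u^{\circ p^{n-k}}$ fixed all of~$\bigcup_{i\le n+1}C_i$, each~$C_i$ forces a drop of at least~$1$ in the polygon of~$(u^{\circ p^{n-k}}(x)-x)/x$, and since that polygon lies in the region~$\{w\ge 0\}$ you correctly obtain $v_p(1-u'(0))+(n-k)\ge n+1$. But the ``parallel analysis'' for~$u$ does \emph{not} yield the reverse inequality. Knowing only that the fixed-point set of~$u$ equals $\{0\}\cup\bigcup_{i\le k}C_i$ tells you the negative-slope segments of the polygon of~$(u(x)-x)/x$ occur precisely at the slopes $-1/(p^i-p^{i-1})$ for~$i\le k$; it does \emph{not} bound their widths from above (the fixed points could be multiple roots of~$u(x)-x$), nor does it guarantee the polygon ever reaches the~$v$-axis (the series could be divisible by~$p$). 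Running your own inequality in this direction gives only $v_p(1-u'(0))\ge k$, which is perfectly compatible with $v_p(1-u'(0))\ge k+1$ --- no contradiction.

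The paper closes this gap by invoking two results of Lubin that you have not accounted for: the roots of~$e(x)-x$ are simple for every iterate~$e$ of~$u$ \cite[Proposition~4.5.2]{L}, and $e(x)-x\not\equiv 0\bmod p$ \cite[Proposition~4.3.1]{L}. With these two facts the segment widths are exactly~$p^i-p^{i-1}$, each drop is exactly~$1$, and the polygon terminates on the~$v$-axis, so the total drop equals the starting height and $v_p(1-u'(0))=k$ on the nose. (Simplicity can alternatively be extracted from the unit property of~$\log_f'$, cf.\ Lemma~\ref{dlog}, but the nonvanishing modulo~$p$ genuinely requires Lubin's argument and is not derivable from the three ingredients you list.) Once the stabilizers are known exactly, your inductive packaging and the paper's direct orbit-stabilizer count are equivalent. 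Your separate freeness argument is correct, though as the paper observes it is unnecessary: a transitive~$\Z_p$-action on an infinite set is automatically free.
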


\begin{proof} The subgroup of ~$S_0(\Z_p)$ topologically generated by ~$u$ is isomorphic to ~$\Z_p.$ Therefore, as ~$\mathcal{T}_{\pi_k}$ is infinite, if ~$u^{\circ \Z_p}$ acts transitively on ~$\mathcal{T}_{\pi_k}$ it must also act freely. To prove this action is transitive, we invoke the counting powers of the orbit stabilizer theorem.  

We begin by calculating the fixed points of iterates of ~$u.$ Consider the set ~$$\Lambda_u :=  \{ \pi \in \mathfrak{m}_{\C_p} : u^{\circ n}(\pi) = \pi \text{ for some } n \in \Z_+ \}$$ of ~$u$-periodic points in ~$\mathfrak{m}_{\C_p}.$ Lubin proves ~$\Lambda_u = \Lambda_f$ \cite[Proposition~4.2.1]{L}. Let ~$e$ be a finite iterate of ~$u.$ Consider the series ~$e(x) - x.$ The roots of this series are the fixed points of  ~$e,$ and hence each root is an element of ~$\Lambda_u.$ Since ~$e$ is defined over ~$\Z_p,$ the set of roots of ~$e$ in ~$\mathfrak{m}_{\C_p}$ is a finite union of  ~$G_{\Q_p}$-orbits in ~$\Lambda_u = \Lambda_f.$ The  ~$G_{\Q_p}$-orbits in ~$\Lambda_f$ are the collection of sets ~$$C_n = \{x \in \mathfrak{m}_{\C_p}: f^{\circ n}(x) = 0 \text{ but } f^{\circ n-1}(x) \neq 0 \},$$ where ~$n$ ranges over the positive integers and ~$C_0 = \{0\}.$ Evaluation under ~$f$ is an ~$e$-equivariant surjection from ~$C_n$ to ~$C_{n-1}.$ Hence the fixed points of ~$e$ are contained in  ~$\bigcup_{i\leq n} C_i$ for some ~$n.$  By \cite[Proposition~4.5.2]{L} and \cite[Proposition~4.3.1]{L}, respectively, the roots of ~$e(x) - x$ are simple and ~$e(x) - x  \not\equiv 0 \mod p.$ Thus, the negative slopes of the Newton polygon of ~$e(x) - x$ must span between  ~$(1,v_p(1-e'(0)))$ and the ~$v$-axis. Since when ~$n\geq 1$ the set ~$C_n$ is the full set of roots of an Eisenstein polynomial ~$\Z_p$, each of the sets ~$C_n$ account for a segment in the Newton polygon of ~$e(x)-x$ with a decline of one unit.  We conclude that the roots of ~$e(x)-x$ and hence the fixed points of ~$e$ are simple and equal to ~$\bigcup_{i\leq v_p(1-e'(0))} C_i.$

From this calculation, we deduce that ~$v_p(1- u'(0)) = k$ and the point-wise stabilizer of ~$\pi \in C_n$ is the set ~$$\{ e\in u^{\circ \Z_p}: v_p(1-e'(0)) \geq n\}.$$  By assumption \ref{u assumption}, it follows  the stabilizer of  any ~$\pi \in C_n$  for ~$n \geq k$ is the group ~$u^{\circ p^{n-k}\Z_p}.$

We now show that ~$u^{\circ \Z_p}$ acts transitively on ~$\mathcal{T}_{\pi_k}.$ As in the proof of proposition \ref{galois tree}, it is enough to show that ~$u^{\circ \Z_p}$ acts transitively on the set of possible values for the ~$n$-th coordinate of a sequence in ~$\mathcal{T}_{\pi_k}.$ Let ~$n\geq k$ and consider the set ~$$W_n(\pi_k) = \{\pi \in \mathfrak{m}_{\C_p}: f^{\circ n - k}(\pi) = \pi_k \}$$ The set ~$W_n(\pi_k)$ are the possible values of an n-th coordinate of a sequence in  ~$\mathcal{T}_{\pi_k}.$ Because ~$\pi_k \neq 0,$ the Newton polygon of ~$f^{\circ n - k} - \pi_k$ has exactly one segment of positive slope. The length of this segment is ~$p^{n-k}$ and its slope is ~$\frac{1}{p^{n-1}(p-1)}.$ Hence, ~$W_n(\pi_k)$ has order ~$p^{n-k}$ and is contained in ~$C_{n}.$ It follows that the ~$u^{\circ \Z_p}$-orbit of a point ~$\pi \in W_n(\pi_k)$ has order ~$$|u^{\circ \Z_p}/u^{\circ p^{n-k} \Z_p}| = p^{n-k} = |W_n(\pi_k)|.$$ We conclude the action of ~$u^{\circ \Z_p}$ on ~$W_n(\pi_k)$ is transitive.\end{proof}

Let ~$K := \Q_p(\pi_k).$ The transitive ~$G_{\Q_p}$ action on ~$\mathcal{T}_0$ restricts to a transitive ~$G_K$-action on ~$\mathcal{T}_{\pi_k}.$ The kernel of this action is ~$K_{\infty} := \Q_p(\pi_i|i>0).$ Let ~$\sigma \in G_K.$ Because  ~$\mathcal{T}_{\pi_k}$ is a torsor for ~$u^{\circ \Z_p}$ there is a unique element ~$u_\sigma \in u^{\circ \Z_p}$ such that ~$u_\sigma\Pi = \sigma\Pi.$ We set ~$\chi_f(\sigma) := u_\sigma'(0).$ The power series ~$u_\sigma$ can be recovered from ~$\chi_f(\sigma)$ as ~$u_\sigma = [\chi_f(\sigma)]_f.$     

\begin{prop} The map ~$[\chi_f]_f: G_K \rightarrow u^{\circ \Z_p}$ is a surjective group homomorphism satisfying ~$[\chi_f(\sigma)]_f(\pi_i) = \sigma(\pi_i)$ for all ~$i>0.$ The kernel of ~$[\chi_f]_f$ is ~$K_{\infty}.$     
\end{prop}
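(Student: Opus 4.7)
The plan is to deduce every assertion from the defining equation $u_\sigma \Pi = \sigma\Pi$ combined with the stabilizer calculation already carried out in the proof of the previous proposition. First, I would verify the identity $[\chi_f(\sigma)]_f(\pi_i) = \sigma(\pi_i)$ for all $i \geq 1$. By definition of $u_\sigma$, the sequences $u_\sigma\Pi$ and $\sigma\Pi$ agree coordinate-wise, so $u_\sigma(\pi_i) = \sigma(\pi_i)$ for every $i$. This is substantive information when $i > k$; when $i \leq k$, both sides equal $\pi_i$, since the preceding proof shows the pointwise stabilizer of a point of $C_n$ is $\{e \in u^{\circ\Z_p} : v_p(1-e'(0)) \geq n\}$, which by Assumption \ref{u assumption} contains all of $u^{\circ\Z_p}$ whenever $n \leq k$, and because $\sigma \in G_K$ fixes $\pi_i = f^{\circ(k-i)}(\pi_k) \in K$. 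Writing $u_\sigma = [\chi_f(\sigma)]_f$ then gives the formula.

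Next, I would establish multiplicativity. Given $\sigma, \tau \in G_K$, the coefficients of $u_\tau$ lie in $\Z_p$ and are therefore fixed by $\sigma$, so substitution into $u_\tau$ commutes with the action of $\sigma$. This yields
\[
[\chi_f(\sigma\tau)]_f(\pi_i) \;=\; \sigma\tau(\pi_i) \;=\; \sigma(u_\tau(\pi_i)) \;=\; u_\tau(\sigma(\pi_i)) \;=\; (u_\tau \circ u_\sigma)(\pi_i).
\]
Because $u^{\circ\Z_p}$ is abelian and composition of series multiplies derivatives at the origin, $u_\tau \circ u_\sigma = [\chi_f(\sigma)\chi_f(\tau)]_f$. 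The freeness of the $u^{\circ\Z_p}$-action on $\mathcal{T}_{\pi_k}$ then forces $\chi_f(\sigma\tau) = \chi_f(\sigma)\chi_f(\tau)$.

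For surjectivity I would compare the two transitive actions on $\mathcal{T}_{\pi_k}$. Any element of $G_{\Q_p}$ carrying one sequence in $\mathcal{T}_{\pi_k}$ to another must fix $\pi_1,\ldots,\pi_k$ and hence lies in $G_K$; combined with Proposition \ref{galois tree}, this shows $G_K$ acts transitively on $\mathcal{T}_{\pi_k}$. Given $\omega \in u^{\circ\Z_p}$, the sequence $\omega\Pi$ lies in $\mathcal{T}_{\pi_k}$, so some $\sigma \in G_K$ satisfies $\sigma\Pi = \omega\Pi$, i.e.\ $u_\sigma = \omega$, proving surjectivity. Finally, $\sigma$ lies in the kernel exactly when $u_\sigma$ is the identity series, equivalently $\sigma$ fixes every $\pi_i$; the fixed field of this subgroup of $G_K$ is precisely $\Q_p(\pi_i \mid i > 0) = K_\infty$. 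I anticipate no serious obstacle; the argument is essentially formal bookkeeping layered on top of the torsor structure already established, with the only delicate point being the handling of the first $k$ coordinates, which is covered by the stabilizer calculation from the previous proposition.
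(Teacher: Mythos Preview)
Your proposal is correct and follows essentially the same route as the paper: the paper relegates the identity $[\chi_f(\sigma)]_f(\pi_i)=\sigma(\pi_i)$, surjectivity, and the kernel computation to ``follows directly from the discussion above'' and only writes out the homomorphism verification, which is exactly your chain $\sigma\tau(\Pi)=\sigma(u_\tau\Pi)=u_\tau(\sigma\Pi)=u_\tau u_\sigma(\Pi)$ together with uniqueness from the torsor structure. Your separate treatment of the coordinates $i\le k$ is harmless but unnecessary, since the defining equality $u_\sigma\Pi=\sigma\Pi$ is already an equality of full sequences.
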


\begin{proof} Outside of the fact that ~$[\chi_f]_f$ is a homomorphism, this proposition follows directly from the discussion above. We show ~$[\chi_f]_f$ is a homomorphism. Let ~$\sigma_1,\sigma_2 \in G_K.$ Then
\begin{align*} [\chi_f(\sigma_2)]_f[\chi_f(\sigma_1)]_f(\Pi)&=[\chi_f(\sigma_1)]_f[\chi_f(\sigma_2)]_f(\Pi) \\&= [\chi_f(\sigma_1)]_f\sigma_2(\Pi) \\&= \sigma_2([\chi_f(\sigma_1)]_f\Pi) \\&= \sigma_2\sigma_1(\Pi).\end{align*}
By uniqueness, it follows ~$[\chi_f(\sigma_2)]_f[\chi_f(\sigma_1)]_f = [\chi_f(\sigma_2\sigma_1)]_f$ and ~$[\chi_f]_f$ is a homomorphism.
\end{proof}

The character ~$\chi_f$ comes via a 'geometric' construction and therefore should have good behavior from the viewpoint of ~$p$-adic Hodge theory. In the next section we show this is the case. Specifically, we show ~$\chi_f$ is crystalline of weight ~$1.$  

\section{The Hodge Theory of ~$\chi_f$}\label{sect: pHT}

\subsection{Fontaine's Period Rings} To show that ~$\chi_f$ is crystalline, we must show that ~$\chi_f$ occurs (by definition) as a ~$G_K$-sub-representation in Fontaine's period ring ~$\mathbf{B}_{cris}.$ In this section, we will recall the construction of this ring as well as several other of Fontaine's rings of periods (for which the original constructions can be found in \cite{F}). We will follow the naming conventions of Berger \cite{B2}.  

The construction of the period rings begin in characteristic ~$p.$ Let ~$\mathbf{\sE}^+ := \varprojlim_{x \mapsto x^p} \mathcal{O}_{\C_p}/p.$ The ring ~$\mathbf{\sE}^+$ inherits an action of the Galois group ~$G_{\Q_p}$ from the action of ~$G_{\Q_p}$ on ~$\mathcal{O}_{\C_p}.$ Additionally, by virtue of being a ring of characteristic ~$p,$ the ring ~$\mathbf{\sE}^+$  comes equipped with a Frobenius endomorphism ~$Frob_{\mathbf{\sE}^+}.$ The map ~$Frob_{\mathbf{\sE}^+}$ commutes with the action of ~$G_{\Q_p}.$ 

The map ~$Frob_{\mathbf{\sE}^+}$ is an isomorphism and therefore ~$\mathbf{\sE}^+$ is perfect. Set ~$\mathbf{\widetilde{A}}^+ = W(\mathbf{\sE}^+),$ the Witt vectors of ~$\mathbf{\sE}^+.$ The formality of the Witt vectors implies that the commuting actions of ~$Frob_{\mathbf{\sE}^+}$ and ~$G_{\Q_p}$ on ~$\mathbf{\sE}^+$ lift, respectively, to commuting actions of ~$Frob_{\mathbf{\sE}^+}$ and ~$G_{\Q_p}$ on ~$\mathbf{\widetilde{A}}^+$ as ring endomorphisms. The lift of ~$Frob_{\mathbf{\sE}^+}$ is denoted by ~$\varphi.$  

Let ~$\sB^+ := \sA^+[\frac{1}{p}].$ Fontaine constructs a surjective ~$G_{\Q_p}$-equivariant ring homomorphism ~$\theta:\sB^+ \rightarrow \C_p$ which can be defined as follows. The map ~$\theta$ is the unique homomorphism which is continuous with respect to the ~$p$-adic topologies on ~$\sB^+$ and ~$\C_p,$ and which maps the Teichm{\"u}ller representative ~$\{x \}$ to ~$\lim_{n \rightarrow \infty} x_n$ where ~$x_n$ is any lift of ~$x^{1/p^n}$ to ~$\mathcal{O}_{\C_p}.$ 

The period ring ~$\mathbf{B}_{dR}^{+}$ is defined as the ~$\ker(\theta)$-adic completion of ~$\sB^+.$ The ring ~$\mathbf{B}_{dR}^{+}$ is topologized so that inherits the topology generated by the intersection of the ~$\ker(\theta)$-adic and ~$p$-adic topologies of the dense subring ~$\sB^+.$ The map ~$\theta$ extends to a continuous ~$G_{\Q_p}$-equivariant surjection ~$\theta:\mathbf{B}_{dR}^{+} \rightarrow \C_p$ and ~$\ker(\theta)$ is principal. The ring ~$\mathbf{B}_{dR}^{+}$ is abstractly isomorphic to ~$\C_p\llbracket      t\rrbracket.$ The map ~$\theta$ induces a ~$\mathbf{N}$-graded filtration on ~$\mathbf{B}_{dR}^{+}$ where ~$\mathrm{Fil}^i\mathbf{B}_{dR}^{+}:= \ker(\theta)^i.$ The de Rham ring of periods, ~$\mathbf{B}_{dR},$ is defined as the fraction field of ~$\mathbf{B}_{dR}^+.$ The filtration on ~$\mathbf{B}_{dR}^{+}$ extends to a ~$\mathbf{Z}$-graded filtration on ~$\mathbf{B}_{dR}.$

Let ~$V/\Q_p$ be a finite dimensional representation of the absolute Galois group of a ~$p$-adic field ~$E.$ We say ~$V$ is de Rham if the ~$E$-dimension of ~$\mathbf{D}_{dR}^*(V) := \mathrm{Hom}_{\Q_p\llbracket      G_E\rrbracket}(V,\mathbf{B}_{dR})$ is equal to the dimension of ~$V.$ The filtration on ~$\mathbf{B}_{dR}$ induces a filtration ~$\mathbf{D}_{dR}^*(V).$ The nonzero graded pieces of this filtration are called the weights of ~$V.$ The weight of the cyclotomic character is ~$1.$

The ring ~$\mathbf{B}_{dR}$ does not admit a natural extension of the Frobenius endomorphism, ~$\varphi,$ of ~$\sA^+.$ To rectify this, Fontaine defines a subfield ~$\mathbf{B}_{cris}$ of ~$\mathbf{B}_{dR}$ on which the endomorphism ~$\varphi$ extends. The field ~$\mathbf{B}_{cris}$ is defined as follows: Let ~$\mathbf{A}^\circ_{cris}$ be the PD-envelope of ~$\sA^+$ with respect to the ideal ~$\ker(\theta) \cap \sA^+.$ The ring ~$\mathbf{A}^\circ_{cris}$ is a subring of ~$\mathbf{B}_{dR}^+.$ We define ~$\mathbf{A}_{cris}$ to be the ~$p$-adic completion of ~$\mathbf{A}^\circ_{cris}.$ One can show that the inclusion of ~$\mathbf{A}^\circ_{cris}$ into ~$\mathbf{B}_{dR}^+$ extends naturally to an inclusion of ~$\mathbf{A}_{cris}$ into ~$\mathbf{B}_{dR}^+.$ Under this inclusion ~$\mathbf{A}_{cris}$ is identified with the set of elements of the form ~$\sum_{i=0}^{\infty} a_n \frac{t^n}{n!}$ where ~$a_n \in \sA^+$ such that ~$\lim_{n\rightarrow \infty} a_n = 0$ in the ~$p$-adic topology and ~$t\in \ker(\theta) \cap \sA^+.$ The ring ~$\mathbf{B}_{cris}^+$ is defined as ~$\mathbf{B}_{cris}^+:= \mathbf{A}_{cris}[\frac{1}{p}]$ and  ~$\mathbf{B}_{cris}$ is the defined as the fraction field of ~$\mathbf{A}_{cris}.$ Let ~$E_0$ be the maximal unramified extension of ~$\Q_p$ contained in ~$E.$ A finite dimensional representation ~$V/\Q_p$ of the absolute Galois group of a ~$p$-adic field ~$E$ is called crystalline if the ~$E_0$-dimension of ~$\mathbf{D}_{cris}^*(V) := \mathrm{Hom}_{\Q_p\llbracket      G_E\rrbracket}(V,\mathbf{B}_{cris})$ is equal to the dimension of ~$V.$ The endomorphism ~$\varphi$ extends uniquely to an endomorphism of the rings ~$\mathbf{A}^\circ_{cris}, \mathbf{A}_{cris}, \mathbf{B}_{cris}^+.$  and ~$\mathbf{B}_{cris}.$

\subsection{The Universal ~$f$-Consistent Sequence}\label{sect:a ring}

Let ~$\Z_p\llbracket      x_1\rrbracket$ be the ring of formal power series over ~$\Z_p$ in the indeterminate ~$x_1.$ In this section we will define a ring,   ~$A_{\infty},$ containing ~$\Z_p\llbracket      x_1\rrbracket,$ which parameterizes ~$f$-consistent sequences in certain topological rings. The initial term of such a sequence will be parameterized by ~$x_1.$ We will show that one can define a continuous injection ~$A_{\infty} \hookrightarrow \sA^+$ under which the image of ~$A_{\infty}$ is ~$G_{K}$ stable and satisfies ~$\sigma(x_1) = [\chi_f(\sigma)]_f(x_1).$ 

We begin by defining ~$A_{\infty}$. For each positive integer ~$i,$ set ~$A_i := \Z_p\llbracket      x_i\rrbracket,$ the ring of formal power series over ~$\Z_p$ in the indeterminate ~$x_i.$ Denote, for each ~$i,$ the homomorphism that maps ~$x_i \mapsto f(x_{i+1})$ by ~$[f'(0)]_f^*: A_i \rightarrow A_{i+1}.$ We define ~$A_{\infty}^{\circ} := \displaystyle\varinjlim A_i$ to be the colimit of the rings ~$A_i$ with respect to the transition maps ~$[f'(0)]_f^*.$ Finally, the ring ~$A_{\infty}$ is defined to be the ~$p$-adic completion of ~$A_{\infty}^{\circ}.$ 

We view ~$A_{\infty}$ as a topological ring under the adic topology induced by the ideal ~$(p,x_1).$ The reader should be aware that while ~$A_{\infty}$ is complete with respect to the finer ~$p$-adic topology, it is not complete with respect to the ~$(p,x_1)$-adic topology. Under the topology on ~$A_{\infty}$ the elements ~$x_i$ are topologically nilpotent and together topologically generate ~$A_{\infty}$ as a ~$\Z_p$-algebra.

The ring ~$A_{\infty}$ is a natural object to consider from the viewpoint of nonarchemedian dynamics for it is universal for ~$f$-consistent sequences in the following sense: given any ~$p$-adically complete, ind-complete adic ~$\Z_p$-algebra ~$S$ and any ~$f$-consistent sequence ~$s := (s_1,s_2,s_3, \dots)$ of topologically nilpotent elements in ~$S$ there exists a unique homomorphism ~$\phi_s: A_{\infty} \rightarrow S$ such that ~$\phi_s(x_i) = s_i.$ Conversely, any homomorphism from ~$A_{\infty}$ to ~$S$ arises as ~$\phi_s$ for some sequence ~$s.$ In other words, ~$A_{\infty}$ represents the functor from ~$p$-adically complete, ind-complete adic ~$\mathbb{Z}_p$-algebras to sets which sends an algebra ~$S$ to the set of ~$f$-consistent sequences of topologically nilpotent elements in ~$S.$

Alternatively, we claim that ~$A_{\infty}$ is canonically isomorphic to ~$W(A_{\infty}/pA_{\infty}),$ the Witt vectors of the residue ring ~$A_{\infty}/pA_{\infty},$ and hence ~$A_{\infty}$ is closely connected to constructions in ~$p$-adic Hodge theory. The fact that ~$A_{\infty} \cong W(A_{\infty}/pA_{\infty})$ follows from the observation that ~$A_{\infty}$ is a strict ~$p$-ring, i.e. that ~$A_{\infty}$ is complete and Hausdorff for the ~$p$-adic topology, ~$p$ is not a zero divisor in ~$A_{\infty},$ and ~$A_{\infty}/pA_{\infty}$ is perfect \cite{H}. Of these three criterion only the third is not immediately obvious for ~$A_{\infty}.$ To see that ~$A_{\infty}/pA_{\infty}$ is perfect, we recall:

\begin{theorem}\label{mod p lubin} \cite[Corollary 6.2.1, p. 343]{L} Let ~$k$ be a finite field, and let ~$\mathfrak{u},\mathfrak{f} \neq 0$ be invertible
and non-invertible, respectively, in ~$S_0(k)$, commuting with each other. Then either ~$\mathfrak{u}$ is a torsion element of ~$S_0(k)$ or ~$\mathfrak{f}$ has the form ~$\mathfrak{f}(x) = \mathfrak{a}(x^{p^h})$ with ~$h\in\Z_{+}$ and ~$a\in S_0(k)$ is invertible. \end{theorem}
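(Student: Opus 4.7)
The plan is to carry out a leading-term analysis of the commutation identity $\mathfrak{f}(\mathfrak{u}(x)) = \mathfrak{u}(\mathfrak{f}(x))$ in characteristic~$p$, exploiting the Frobenius identity $(A+B)^{p^j} = A^{p^j} + B^{p^j}$. First, since $k^\times$ is finite, $\mathfrak{u}'(0)$ has finite order, so (after replacing $\mathfrak{u}$ by an iterate, which preserves commutation with $\mathfrak{f}$ and preserves being non-torsion) I may assume $\mathfrak{u}(x) = x + bx^r + O(x^{r+1})$ with $b \in k^\times$ and $r \geq 2$. Write $\mathfrak{f}(x) = \sum_i c_i x^i$, let $n$ denote the lowest exponent, set $h_{\max} := \min\{v_p(i) : c_i \neq 0\}$, and let $i_0$ be the smallest exponent with $v_p(i_0) = h_{\max}$; write $i_0 = p^{h_{\max}} m_0$ with $\gcd(m_0, p) = 1$. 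The desired conclusion amounts to showing $n = i_0 = p^{h_{\max}}$ (so $m_0 = 1$ and $h_{\max} \geq 1$, the latter from $n \geq 2$).

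The key calculation is: for each $i = p^{v_p(i)} m_i$ in the support of $\mathfrak{f}$, the Frobenius expansion gives
\[
(x+\epsilon)^i - x^i \;=\; m_i\, b^{p^{v_p(i)}}\, x^{i + p^{v_p(i)}(r-1)} \;+\; (\text{higher order in } x),
\]
with $\epsilon := \mathfrak{u}(x) - x$. Thus each $i$-summand contributes its smallest new exponent at $p^{v_p(i)}(m_i + r - 1)$. Comparing across $i$, the smallest new exponent in $\mathfrak{f}(\mathfrak{u}(x)) - \mathfrak{f}(x)$ is $p^{h_{\max}}(m_0 + r - 1)$, with coefficient $c_{i_0} m_0 b^{p^{h_{\max}}}$ — provided $r$ is large enough to exclude all other contributing pairs $(i, k)$. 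On the other side, $\mathfrak{u}(\mathfrak{f}(x)) - \mathfrak{f}(x) = b \mathfrak{f}(x)^r + O(\mathfrak{f}(x)^{r+1})$ has smallest new exponent $nr$. The elementary identity $nr - p^{h_{\max}}(m_0 + r - 1) = r(n - p^{h_{\max}}) - p^{h_{\max}}(m_0 - 1)$ shows $nr > p^{h_{\max}}(m_0 + r - 1)$ for large $r$ unless $n = p^{h_{\max}}$ (the exceptional case, in which necessarily $m_0 = 1$ and the theorem's conclusion holds). Outside this exceptional case, the $i_0$-contribution on the left cannot be matched on the right, forcing $c_{i_0} m_0 b^{p^{h_{\max}}} = 0$, hence $b = 0$, contradicting $\mathfrak{u} \neq \mathrm{id}$.

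To ensure $r$ is above the $\mathfrak{f}$-dependent threshold, iterate $\mathfrak{u}$: in characteristic $p$, $\mathfrak{u}^{\circ p}(x) = x + pbx^r + \cdots = x + O(x^{r+1})$, so the depth of $\mathfrak{u}^{\circ p^k}$ strictly increases with $k$. For $k$ large enough that the depth exceeds the threshold, the argument kills the leading coefficient of $\mathfrak{u}^{\circ p^k} - x$, and re-applying the same lemma at each successive leading term forces $\mathfrak{u}^{\circ p^k} = \mathrm{id}$, i.e., $\mathfrak{u}$ is torsion. In the exceptional case $n = p^{h_{\max}}$, every exponent of $\mathfrak{f}$ is divisible by $p^{h_{\max}} = n$ (from the definition of $h_{\max}$), so $\mathfrak{f}(x) = \mathfrak{a}(x^{p^{h_{\max}}})$ with $\mathfrak{a}(y) := \sum_j c_{j p^{h_{\max}}} y^j$ invertible (linear coefficient $c_n \neq 0$) and $h_{\max} \geq 1$. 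The main obstacle is the exponent bookkeeping needed to verify that no pair $(i, k) \neq (i_0, 1)$ in the expansions contributes at the critical exponent when $r$ is large: one case-splits on $v_p(i) = h_{\max}$ (use $i > i_0 \Rightarrow m_i > m_0$), $v_p(i) > h_{\max}$ (use the factor of $p$ gained in $p^{v_p(i)}$), and $k \geq 2$ (use $k(r-1) > r-1$), each giving a strict inequality for $r$ above a uniform, $\mathfrak{f}$-dependent threshold.
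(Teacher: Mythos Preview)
The paper does not supply its own proof of this statement; Theorem~\ref{mod p lubin} is quoted verbatim from Lubin \cite[Corollary~6.2.1]{L} and used as a black box in the construction of~$A_\infty$. So there is nothing in the present paper to compare your argument against directly. That said, your leading-term analysis of the commutation identity is correct and gives a self-contained proof. The key computation---that in characteristic~$p$ the lowest new exponent in $c_i\bigl((x+\epsilon)^i-x^i\bigr)$ is $p^{v_p(i)}(m_i+r-1)$ with coefficient $c_i\,m_i\,b^{p^{v_p(i)}}$---is right, and the comparison against the order~$nr$ of $\mathfrak{u}(\mathfrak{f}(x))-\mathfrak{f}(x)$ forces the dichotomy exactly as you describe.

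Two small clean-ups. First, once you have shown that for $r$ above the threshold the commutation forces $b=0$, you are finished: this says outright that no nonidentity $\mathfrak{v}\in S_0(k)$ with $\mathfrak{v}'(0)=1$ commuting with~$\mathfrak{f}$ can have depth exceeding the threshold, while the depths of $\mathfrak{u}^{\circ p^k}$ grow without bound if~$\mathfrak{u}$ is non-torsion. The ``re-applying at each successive leading term'' is unnecessary. Second, your threshold bookkeeping simplifies: outside the exceptional case one has $m_0\ge 2$, and since $p^{h_{\max}}\mid n$ forces $n\ge 2p^{h_{\max}}$, the single inequality $r>m_0-1$ already covers all three of your sub-cases (the case $v_p(i)>h_{\max}$ needs only $(p-1)r>m_0-1$, and the case $k\ge 2$ needs only $r\ge 2$).
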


\noindent In our case, theorem \ref{mod p lubin} implies ~$f (x) \equiv a(x^p)\mod p$ for some invertible series ~$a\in S_0(\F_p).$ It follows: 
\begin{align*}
A_{\infty}/pA_{\infty} &\cong A^{\circ}_{\infty}/pA^{\circ}_{\infty} \\ &\cong \displaystyle\varinjlim A_i/pA_i \\ &\cong \displaystyle\varinjlim_{x_i \mapsto a(x_{i+1}^p)} \F_p\llbracket      x_i\rrbracket \\ &\cong \displaystyle\varinjlim_{x_i \mapsto a(x_{i+1})^p} \F_p\llbracket      x_i\rrbracket \\ &\cong \displaystyle\varinjlim_{a^{\circ i}(x_i) \mapsto (a^{\circ i+1}(x_{i+1}))^p} \F_p\llbracket      a^{\circ i}(x_i)\rrbracket \\ &\cong \displaystyle\varinjlim_{y_i \mapsto y_{i+1}^p} \F_p\llbracket      y_i\rrbracket, \end{align*} where ~$y_i := a^{\circ i}(x_i).$ As this final ring is perfect the claim follows.  

Our goal is to find an injection ~$A_{\infty} \hookrightarrow \sA^+$ with particularly nice properties. Since ~$A_{\infty}$ is a strict ~$p$-ring, any injection ~$A_{\infty}/pA_{\infty} \hookrightarrow \sE^+$ lifts canonically to an injection ~$A_{\infty} \hookrightarrow \sA^+.$ Let ~$\Pi := (\pi_1,\pi_2,\pi_3, \dots)$ be the ~$f$-consistent sequence of elements in ~$\mathfrak{m}_{\C_p}$ fixed in section \ref{sect: a character}.  Then, as ~$f(x) \equiv a(x^p) \mod p,$ we observe for each ~$i \in \mathbb{Z}_+$ the series ~$\hat{\pi}_i := \langle a^{\circ k-i}(\pi_k) \mod p \rangle_{k \geq i} \in \sE^+.$ Define ~$\hat{\Pi} := (\hat{\pi}_1, \hat{\pi}_2, \hat{\pi}_3, \ldots).$  The sequence ~$\hat{\Pi}$ is ~$f$-consistent and consists of  topologically nilpotent elements of ~$\sE^+.$ Let ~$\phi_{\hat{\Pi}}: A_{\infty} \rightarrow \sE^+$ be the homomorphism associated to ~$\hat{\Pi}.$ 

\begin{prop} The kernel of the map ~$\phi_{\hat{\Pi}}: A_{\infty} \rightarrow \sE^+$ is ~$pA_{\infty}.$ The induced injection ~$\phi_{\hat{\Pi}}: A_{\infty}/pA_{\infty} \rightarrow \sE^+$ identifies ~$A_{\infty}/pA_{\infty}$ with a ~$G_K$-stable subring of ~$R$ such that for all ~$\sigma \in G_K$ and positive integers ~$i$ the equality  ~$\sigma(\phi_{\hat{\Pi}}(x_i)) = \phi_{\hat{\Pi}}([\chi_f(\sigma)]_f(x_i))$ holds.
\end{prop}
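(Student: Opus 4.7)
The plan is to pass to characteristic $p$ and work inside the perfect ring
\[
A_\infty/pA_\infty \cong \varinjlim_{y_i \mapsto y_{i+1}^p} \F_p[[y_i]]
\]
established just above, where $y_i = a^{\circ i}(x_i)$. Since $\sE^+$ has characteristic $p$, the homomorphism $\phi_{\hat\Pi}$ automatically vanishes on $p$, so both assertions of the proposition reduce to statements about the induced map $\bar\phi : A_\infty/pA_\infty \to \sE^+$.

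For the injectivity of $\bar\phi$, I would use the natural (tilt) valuation $v$ on $\sE^+$, normalized by $v(z) = \lim_n p^n v_p(\tilde z_n)$ for any compatible lift $(\tilde z_n)$ of $z$ to $\OL_{\C_p}$. Set $\epsilon_i := \bar\phi(y_i) = a^{\circ i}(\hat\pi_i) \in \sE^+$; its $n$-th coordinate is $a^{\circ (i+n)}(\pi_{i+n}) \bmod p$. Because $a \in \Z_p[[x]]$ has unit linear coefficient, it preserves $p$-adic valuations of small elements, so Proposition~\ref{Newton shape} gives $v_p(a^{\circ (i+n)}(\pi_{i+n})) = v_p(\pi_{i+n}) = 1/(p^{i+n-1}(p-1))$. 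Multiplying by $p^n$ and taking the limit yields $v(\epsilon_i) = 1/(p^{i-1}(p-1)) > 0$. Since $\sE^+$ is a valuation ring, the $\F_p$-algebra map $\F_p[[y_i]] \to \sE^+$ sending $y_i \mapsto \epsilon_i$ is automatically injective---any nonzero series $\sum c_k y_i^k$ has $v$-valuation $v(\epsilon_i) \cdot \min\{k : c_k \neq 0\}$, which is finite---and injectivity passes to the colimit.

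For the Galois compatibility on the generator $x_i$, the key step is a coordinate-wise calculation. Since $a$ has $\F_p$-coefficients fixed by $\sigma$ and $\sigma(\pi_{i+n}) = [\chi_f(\sigma)]_f(\pi_{i+n})$ by Section~\ref{sect: a character}, the $n$-th coordinate of $\sigma(\hat\pi_i) \in \sE^+$ is congruent modulo $p$ to $a^{\circ n}\bigl([\chi_f(\sigma)]_f(\pi_{i+n})\bigr)$. The main obstacle---and the genuinely non-formal step---is to swap $a^{\circ n}$ past $[\chi_f(\sigma)]_f$ modulo $p$. I would handle this by letting $\bar u_\sigma \in \F_p[[x]]$ denote the reduction of $[\chi_f(\sigma)]_f$: since it commutes with $\bar f(x) = a(x^p) = a(x)^p$, one has
\[
\bar u_\sigma(a(x))^p = \bar u_\sigma(a(x)^p) = \bar u_\sigma(\bar f(x)) = \bar f(\bar u_\sigma(x)) = a(\bar u_\sigma(x))^p,
\]
and the injectivity of Frobenius on $\F_p[[x]]$ forces $\bar u_\sigma \circ a = a \circ \bar u_\sigma$, hence $\bar u_\sigma$ commutes with every $a^{\circ n}$. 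The $n$-th coordinate of $\sigma(\hat\pi_i)$ thus rewrites as $\bar u_\sigma(a^{\circ n}(\pi_{i+n})) \bmod p$, and because $\bar u_\sigma$ has $\F_p$-coefficients it acts coordinate-wise on $\sE^+$, giving $\sigma(\hat\pi_i) = \bar u_\sigma(\hat\pi_i) = \bar\phi([\chi_f(\sigma)]_f(x_i))$ as required.

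Finally, $G_K$-stability of the image is formal: $A_\infty$ is topologically generated over $\Z_p$ by the $x_i$, $\phi_{\hat\Pi}$ is a ring homomorphism, and each $\sigma \in G_K$ sends each $\phi_{\hat\Pi}(x_i)$ back into the image by the formula just proved, so $\sigma$ preserves the whole image. This furnishes the $G_K$-equivariant embedding $A_\infty \hookrightarrow \sA^+$ that will be used in the subsequent sections.
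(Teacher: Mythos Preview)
Your proof is correct, and the Galois-compatibility half follows the same coordinate-wise computation as the paper, with the pleasant difference that you actually justify the key step $\bar u_\sigma \circ a = a \circ \bar u_\sigma$ in $S_0(\F_p)$ (via $\bar f = a(x)^p$ and injectivity of Frobenius), whereas the paper simply writes down the swapped line without comment. Your injectivity argument, however, is genuinely different: the paper observes that any $\F_p$-algebra map out of $\F_p\llbracket x_i\rrbracket$ is either injective or has finite image, and then rules out the finite case by showing that the stabilizer of $\phi_{\hat\Pi}(x_i)$ under the $S_0(\F_p)$-action on $\sE^+$ is trivial (hence its orbit is infinite). Your route---computing the tilt valuation $v(\epsilon_i)=1/(p^{i-1}(p-1))$ and reading off injectivity from the ultrametric inequality---is more direct and makes the structure of the image transparent; the paper's orbit argument avoids normalizing the valuation but is more circuitous. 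Either way the conclusion is the same, and your version would splice into the paper without difficulty.
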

\begin{proof} The ring ~$\sE^+$ has characteristic ~$p$ and therefore ~$\phi_{\hat{\Pi}}$ factors through ~$$A_{\infty}/pA_{\infty} \cong A_{\infty}^{\circ}/pA_{\infty}^{\circ} \cong \varinjlim A_i/pA_i.$$ To prove the proposition, we show that the restriction of the map induced by ~$\phi_{\hat{\Pi}}$ to ~$A_i/pA_i$ is an injection and satisfies ~$\sigma(\phi_{\hat{\Pi}}(x_i)) = \phi_{\hat{\Pi}}([\chi_f(\sigma)]_f(x_i))$ for all ~$\sigma \in G_K.$ 

The ring ~$A_i/pA_i$ is isomorphic to ~$\F_p\llbracket      x_i\rrbracket.$ Therefore, any homomorphism out of ~$A_i/pA_i$ is either injective or has finite image. We claim the latter is false for the map induced by ~$\phi_{\hat{\Pi}}.$ To see this, consider ~$S_0(\F_p)$ acting on ~$\phi_{\hat{\Pi}}(A_i/pA_i).$ We claim that the stabilizer of ~$\phi_{\hat{\Pi}}(x_i)$ is trivial and hence ~$\phi_{\hat{\Pi}}(x_i)$ has infinite orbit. Let ~$z \in S_0(\F_p)$ be a nontrivial element and ~$n = deg(z(x) - x).$ Then ~$$z(\phi_{\hat{\Pi}}(x_i)) - x_i = \langle z(a^{\circ k-i}(\pi_k)) - \pi_k \mod p \rangle_{k \geq i}.$$ 
For any positive integer ~$k,$ the coordinate ~$z(a^{\circ k-i}(\pi_k)) - \pi_k \mod p$ is the image under the reduction map ~$\mathcal{O}_{\C_p} \rightarrow \mathcal{O}_{\C_p}/p$ of an element of ~$p$-adic valuation ~$\frac{n}{p^{k} - p^{k-1}}.$ Hence for ~$k \gg 0,$ we have ~$z(a^{\circ k-i}(\pi_k)) - \pi_k \mod p$ is nonzero. It follows ~$z$ does not stabilize ~$\phi_{\hat{\Pi}}(x_i).$ 

The proof that ~$G_K$ acts in the desired way on ~$\phi_{\hat{\Pi}}(x_i)$ is by direct calculation. Let ~$\sigma \in G_K.$ Then \begin{align*} \sigma(\phi_{\hat{\Pi}}(x_i)) &= \sigma(\hat{\pi}_i) \\ &= \langle \sigma(a^{\circ k-i}(\pi_k)) \mod p \rangle_{k \geq i} \\ &= \langle a^{\circ k-i}(\sigma(\pi_k)) \mod p \rangle_{k \geq i} \\ &= \langle a^{\circ k-i}([\chi_f(\sigma)]_f(\pi_i)) \mod p \rangle_{k \geq i} \\ &= \langle [\chi_f(\sigma)]_f(a^{\circ k-i}(\pi_i)) \mod p \rangle_{k \geq i} \\ & = [\chi_f(\sigma)]_f(\hat{\pi}) \\ &= \phi_{\hat{\Pi}}([\chi_f(\sigma)]_f(x_i)). \end{align*}\end{proof}

Because ~$A_{\infty}$ is a strict ~$p$-ring, the injection induced by ~$\phi_{\hat{\Pi}}:A_{\infty}/pA_{\infty} \hookrightarrow \sE^+$ lifts canonically to an injection ~$W(\phi_{\hat{\Pi}}):A_{\infty} \hookrightarrow \sA^+.$ Henceforth, we will identify ~$A_{\infty}$ with its image under ~$W(\phi_{\hat{\Pi}}).$ The ~$G_K$ action on ~$A_{\infty}/pA_{\infty}$ lifts functorially to a ~$G_K$ action on ~$A_{\infty};$ furthermore, the map ~$W(\phi_{\hat{\Pi}})$ is ~$G_K$-equivariant and identifies ~$A_{\infty}$ with a ~$G_K$-stable subring of ~$\sA^+.$ Our next goal is to precisely describe the action of ~$G_K$ on ~$A_{\infty}.$ The following theorem provides the rigidity needed to understand these lifts. 

Let ~$\F_q$ be a finite extension of ~$\F_p$ and consider an invertible series ~$\omega \in S_0(\F_q)$ such that ~$\omega'(0) = 1.$  The absolute ramification index of ~$\omega$ is defined to be the limit 

$$e(\omega) :=  \lim_{n \rightarrow \infty} (p-1)v_x(\omega^{\circ p^n}(x) -x)/p^{n+1}.$$ 

\begin{theorem}\label{LMSS}\cite[Proposition~5.5]{LMS}. Assume ~$e(\omega) < \infty,$ then the separable normalizer of ~$\omega^{\circ \Z_p}$ in ~$S_0(\F_q)$ is a finite extension ~$\omega^{\circ \Z_p}$ by a group of order dividing ~$e(\omega).$ 
\end{theorem}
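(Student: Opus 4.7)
The plan is to analyze the separable normalizer $N$ of $\omega^{\circ \Z_p}$ in $S_0(\F_q)$ via its conjugation action on $\omega^{\circ \Z_p}$. Because $\omega^{\circ \Z_p} \cong \Z_p$ is procyclic with continuous automorphism group $\Z_p^\times$, conjugation defines a character
$$c : N \lra \Z_p^\times, \qquad \eta\,\omega\,\eta^{-1} = \omega^{\circ c(\eta)}.$$
Setting $Z := \ker(c)$ (the centralizer of $\omega$ among invertible series), one obtains an exact sequence $1 \to Z/\omega^{\circ \Z_p} \to N/\omega^{\circ \Z_p} \to \mathrm{Im}(c) \to 1$, and the task is to control each outer factor using the hypothesis $e(\omega) < \infty$ and to match their product with $e(\omega)$.

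Step 1 (centralizer). I would first show that when $e(\omega) < \infty$ the centralizer $Z$ collapses onto $\omega^{\circ \Z_p}$ up to a finite cyclic twist recorded by the linear-term map $\eta \mapsto \eta'(0) \in \F_q^\times$. The idea is that the tower of fixed-point sets of the iterates $\omega^{\circ p^n}$ generates a nontrivial arithmetically profinite extension of $\F_q(\!(x)\!)$ whose Galois theory is rigidified by $e(\omega)$; any $\eta$ commuting with $\omega$ preserves the whole tower and is therefore determined, up to a root of unity in $\F_q^\times$, by its action on the first nontrivial fixed-point set. This would give that $Z/\omega^{\circ \Z_p}$ embeds into a finite subgroup of $\F_q^\times$.

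Step 2 (conjugation character). Next I would argue that $\mathrm{Im}(c)$ is itself finite. Comparing the leading nonlinear coefficient in the identity $\eta \circ \omega = \omega^{\circ c(\eta)} \circ \eta$ shows that $c(\eta) \bmod p$ is a specific power of $\eta'(0)$, and $\eta'(0)$ is a root of unity in $\F_q^\times$. A rigidity argument analogous to Step 1 then rules out any nontrivial pro-$p$ part in $c(\eta)$: a putative $c(\eta) \in 1 + p\Z_p \setminus \{1\}$ would force $\eta$ to implement a nontrivial automorphism of the field of norms that acts trivially on the tangent direction, contradicting the rigidity supplied by $e(\omega) < \infty$. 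Thus $\mathrm{Im}(c)$ is finite cyclic.

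Step 3 (matching with $e(\omega)$). Finally, I would combine the two bounds with the defining formula
$$e(\omega) = \lim_{n \to \infty} (p-1)\,v_x(\omega^{\circ p^n}(x) - x)/p^{n+1}.$$
The key observation is that for any $\eta \in N$, conjugation preserves $v_x(\omega^{\circ p^n}(x) - x)$ exactly but multiplies the leading coefficient of $\omega^{\circ p^n}(x) - x$ by a definite power of $\eta'(0)$ depending on that valuation. Passing to the limit forces $\eta'(0)$ to be an $e(\omega)$-th root of unity, and combining this with the conclusions of Steps 1 and 2 yields the divisibility $|N/\omega^{\circ \Z_p}| \mid e(\omega)$.

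The main obstacle I anticipate is Step 1: one must rule out the existence of wild invertible series $\eta$ (those with $\eta'(0) = 1$) that commute with $\omega$ but fail to be $\Z_p$-iterates of $\omega$. This is a genuine rigidity question for dynamics in characteristic $p$, where the Lubin logarithm (which lives naturally over $\Q_p$) is unavailable. A natural route is to replace it by the field-of-norms correspondence, translating the statement into a question about automorphisms of a characteristic-$0$ local field of absolute ramification index $e(\omega)$, where the Lubin--Tate machinery directly applies.
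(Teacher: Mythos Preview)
The paper does not supply its own proof of this theorem: it is quoted verbatim as \cite[Proposition~5.5]{LMS} and used as a black box in the proof of Lemma~\ref{power of a}. There is therefore nothing in the paper to compare your proposal against.

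For what it is worth, the route you gesture at in your final paragraph---passing through the field-of-norms correspondence to translate the question into one about continuous automorphisms of a characteristic-zero local field of absolute ramification index $e(\omega)$---is precisely the method of the cited reference. Your Steps~1--3, by contrast, attempt a direct characteristic-$p$ argument; as you yourself flag, the rigidity needed in Step~1 (ruling out wild commuting series outside $\omega^{\circ\Z_p}$) is the genuine content and is not obtainable by the coefficient-chasing of Step~3 alone. The field-of-norms translation is what supplies that rigidity in \cite{LMS}, so your proposal is best read as an outline that correctly identifies the structure of the argument but defers the hard step to exactly the machinery the original proof uses.
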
 

\begin{lemma}\label{power of a} There exists a positive integer ~$m$ such that ~$a^{\circ N} \in u^{\circ \Z_p} \mod p.$ 
\end{lemma}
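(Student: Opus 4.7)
The plan is to apply Theorem~\ref{LMSS} to the subgroup $\bar u^{\circ \Z_p} \subseteq S_0(\F_p)$ (writing $\bar u := u \bmod p$) together with the element $a$. To do so, I would need to verify three things: that $a$ centralizes $\bar u$, that the absolute ramification index $e(\bar u)$ is finite, and that $a$ lies in the separable normalizer of $\bar u^{\circ \Z_p}$.

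First I would check that $a$ commutes with $\bar u$ in $S_0(\F_p)$. Reducing $f \circ u = u \circ f$ modulo $p$ and using $f \equiv a(x^p) \bmod p$ gives $a \circ \phi \circ \bar u = \bar u \circ a \circ \phi$, where $\phi(x) = x^p$. Because the coefficients of $\bar u$ lie in $\F_p$ one has $\phi \circ \bar u = \bar u \circ \phi$, and $\phi$ is right-cancellable under substitution (since $g_1(x^p) = g_2(x^p)$ forces $g_1 = g_2$ coefficient by coefficient). Cancelling $\phi$ on the right yields $a \circ \bar u = \bar u \circ a$. Moreover $a$ is invertible in $S_0(\F_p)$ by Theorem~\ref{mod p lubin}, so $a$ is a separable element of the normalizer of $\bar u^{\circ \Z_p}$.

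Next I would compute $e(\bar u)$. Set $k := v_p(1 - u'(0))$; by Assumption~\ref{u assumption} one has $k \geq 1$ and $v_p(1 - u'(0)^{p^n}) = k + n$ for every $n \geq 0$. The fixed-point analysis carried out in Section~\ref{sect: a character} identifies the fixed points of $u^{\circ p^n}$ in $\mathfrak{m}_{\C_p}$ with $\{0\} \cup C_1 \cup \cdots \cup C_{k+n}$, of total cardinality $p^{k+n}$. Together with the fact that the $x$-coefficient of $u^{\circ p^n}(x) - x$ has valuation $k+n$, this forces the Newton polygon of $u^{\circ p^n}(x) - x$ to have a vertex at $(p^{k+n}, 0)$, with every intermediate coefficient of strictly positive $p$-adic valuation. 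Reducing modulo $p$ then gives $v_x(\bar u^{\circ p^n}(x) - x) = p^{k+n}$, and hence
\[
e(\bar u) = \lim_{n \to \infty} \frac{(p-1)\, p^{k+n}}{p^{n+1}} = (p-1)\, p^{k-1} < \infty.
\]

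With these inputs, Theorem~\ref{LMSS} realises the separable normalizer of $\bar u^{\circ \Z_p}$ in $S_0(\F_p)$ as a finite extension of $\bar u^{\circ \Z_p}$ of index dividing $(p-1)p^{k-1}$. Since $a$ lies in this normalizer, its image in the finite quotient has finite order, so some positive iterate $a^{\circ m}$ lies in $\bar u^{\circ \Z_p}$. The step most in need of care is the Newton polygon reduction: one must argue that every coefficient strictly between the two extreme vertices is divisible by $p$. This is essentially the same Weierstrass-type argument used in the proof of Proposition~\ref{Newton shape}, now applied to the series $u^{\circ p^n}(x) - x$; beyond that, the proof is a direct application of the already-established results.
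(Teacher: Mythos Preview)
Your proof is correct and takes essentially the same approach as the paper: establish that $a$ commutes with $\bar u$, use the Newton polygon of $u^{\circ n}(x)-x$ (drawing on the fixed-point count already established in the torsor proposition) to show that $e(\bar u)<\infty$, and then invoke Theorem~\ref{LMSS}. If anything, you supply more detail than the paper, which compresses the commutation of $a$ with $\bar u$ and the Newton polygon step each into a single sentence.
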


\begin{proof}  Note as ~$f$ and ~$u$ commute, so too do ~$a$ and the reduction of ~$u$ modulo ~$p$.  One observes, upon considering the Newton polygon ~$u^{\circ n}(x) - x,$ that ~$v_x(u^{\circ n}(x) - x) = |1-u'(0)^n|_p.$ By assumption \ref{u assumption}, the absolute ramification index of ~$u$ is therefore finite. The result follows by Theorem \ref{LMSS}.
\end{proof}

\begin{prop} Let ~$\sigma \in G_K.$ Then ~$\sigma(x_i) = [\chi_f(\sigma)]_f(x_i).$
\end{prop}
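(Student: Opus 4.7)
The plan is to exhibit both $\sigma$ and $[\chi_f(\sigma)]_f$ as continuous $\Z_p$-algebra endomorphisms of $A_\infty$, note that they coincide on the residue ring $A_\infty/pA_\infty$ by the previous proposition, and then invoke Witt vector rigidity to conclude they are equal on $A_\infty$, whence equal on the generator $x_i$.

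First I would construct the substitution endomorphism $[\chi_f(\sigma)]_f^{\ast}$ of $A_\infty$. Since $[\chi_f(\sigma)]_f \in u^{\circ \Z_p}$ and every element of $u^{\circ \Z_p}$ centralizes $f$ by continuity of composition, the assignment $x_i \mapsto [\chi_f(\sigma)]_f(x_i)$ defines a compatible family of endomorphisms of the rings $A_i = \Z_p\llbracket x_i \rrbracket$ with respect to the transition maps $x_i \mapsto f(x_{i+1})$. This yields an endomorphism of the colimit $A_\infty^\circ$, which extends by $p$-adic continuity to $A_\infty$. On the Galois side, the restriction $\sigma|_{A_\infty}$ is already a $\Z_p$-algebra endomorphism because the previous proposition identified $A_\infty \subset \sA^+$ as a $G_K$-stable subring. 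That same proposition shows $\sigma(x_i) \equiv [\chi_f(\sigma)]_f(x_i) \pmod{p}$ for every $i$, so the two endomorphisms agree on $A_\infty/pA_\infty$.

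The proof is then finished by Witt vector rigidity: Section~\ref{sect:a ring} established that $A_\infty$ is a strict $p$-ring with perfect residue, so $A_\infty \cong W(A_\infty/pA_\infty)$ canonically, and the universal property of the Witt functor forces any ring endomorphism of the residue ring to admit a unique lift to $A_\infty$. Hence $\sigma|_{A_\infty} = [\chi_f(\sigma)]_f^{\ast}$ as endomorphisms of $A_\infty$, and evaluating at $x_i$ gives the claim.

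The main obstacle, such as it is, lies in the compatibility check in the first step: ensuring that the formal substitution $x_i \mapsto [\chi_f(\sigma)]_f(x_i)$ really assembles into a genuine endomorphism of the whole completed colimit $A_\infty$ rather than of each individual $A_i$ in isolation. This is where the centralizer property of $u^{\circ \Z_p}$ and the $p$-adic completeness of $A_\infty$ are both used. Once this setup is in place and the mod-$p$ equality of the previous proposition is at hand, Witt vector rigidity dispatches the conclusion without further input.
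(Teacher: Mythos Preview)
Your argument is correct and is in fact cleaner than the paper's own proof. Both approaches begin identically: construct the substitution endomorphism $[\chi_f(\sigma)]_f^{\ast}$ of $A_\infty$ via the universal property (using that $[\chi_f(\sigma)]_f$ commutes with $f$), observe that $\sigma$ restricts to a ring endomorphism of the $G_K$-stable subring $A_\infty \subset \sA^+$, and note that the two agree modulo $p$ by the previous proposition. At this point you invoke the universal property of Witt vectors---since $A_\infty \cong W(A_\infty/pA_\infty)$ is a strict $p$-ring, any ring endomorphism of the perfect residue ring lifts uniquely---and conclude immediately.

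The paper, by contrast, does not appeal to this uniqueness directly. Instead it argues that the Teichm\"uller lifts $\{x_i \bmod p\}$ topologically generate $A_\infty$ and then verifies by an explicit limit computation that $[\chi_f(\sigma)]_f^{\ast}(\{x_i\}) = \{\sigma(x_i)\}$. That computation requires finding integral lifts of the $p^n$-th roots $x_i^{1/p^n}$ inside $A_\infty$, which is where Lemma~\ref{power of a} (and behind it the Laubie--Movahhedi--Salinier result, Theorem~\ref{LMSS}) enters: one needs $a^{\circ N} \equiv u^{\circ k} \bmod p$ for some $N,k$ in order to express these roots via iterates of $u$. Your route bypasses this entirely, so Lemma~\ref{power of a} and Theorem~\ref{LMSS} become unnecessary for this particular proposition. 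What the paper's hands-on approach buys is an explicit description of how the Teichm\"uller structure interacts with the dynamical generators $x_i$, but for the bare statement at hand your abstract argument suffices and is shorter.
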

\begin{proof} Consider the sequence ~$$[\chi_f(\sigma)]_f(\Pi^{univ}) := ([\chi_f(\sigma)]_f(x_i))_{i\in\Z^{+}}$$ of elements in ~$A_{\infty}.$ Because ~$f$ and ~$[\chi_f(\sigma)]_f$ commute, the sequence ~$[\chi_f(\sigma)]_f(\Pi^{univ})$ is ~$f$-consistent. Hence, by the universal property of ~$A_{\infty},$ there exists an endomorphism ~$[\chi_f(\sigma)]_f^*:= \phi_{[\chi_f(\sigma)]_f(\Pi^{univ})}$ of ~$A_{\infty}$ which maps ~$x_i$ to ~$[\chi_f(\sigma)]_f(x_i).$ 

We wish to show that the Witt lift ~$W(\sigma)$ is equal to ~$[\chi_f(\sigma)]_f^*.$ The former map is defined by its action mod ~$p$ on Teichm{\"u}ller representatives. Denote the Teichm{\"u}ller mapping by ~$\{*\}: A_\infty/pA_{\infty} \rightarrow A_\infty.$ For ~$\overline{\alpha} \in  A_\infty/pA_{\infty}$ the element ~$\{\overline{\alpha}\}$ can be defined as follows: let ~$\alpha_n$ be any lift of ~$\overline{\alpha}^{1/p^n},$ then ~$\{\overline{\alpha}\} := \lim_{n \rightarrow \infty} \alpha_n^{p^n}.$ The map ~$\{*\}$ is well defined as this limit converges and is independent of the choice of lifts ~$\mathbb{\alpha}_n$. The automorphism ~$W(\sigma)$ is the unique map such that ~$W(\sigma)\{\overline{\alpha}\} = \{\sigma(\overline{\alpha})\}.$

We claim the set of Teichm{\"u}ller lifts of ~$x_i \mod p$ together topologically generate ~$A_{\infty}$ (in the ~$(p,x_1)$-adic topology) as a ~$\Z_p$-algebra. To see this, let ~$A_{\infty}^{tm}$ be the sub-algebra topologically generated by these elements. We show ~$A_{\infty}^{tm} =A_{\infty}.$ First note that the residue rings ~$A_{\infty}^{tm}/pA_{\infty}^{tm}$ and ~$A_{\infty}/pA_{\infty}$ are equal. Next observe that ~$A_{\infty}^{tm}$ is a strict ~$p$-ring. To see this, note ~$A_{\infty}^{tm}$ is closed in the ~$p$-adic topology on ~$A_{\infty}$ and is therefore ~$p$-adically complete and Hausdorff, ~$A_{\infty}^{tm}$  is a sub-algebra of ~$A_{\infty}$ and hence ~$p$ is not a zero divisor in ~$A_{\infty}^{tm},$ and ~$A_{\infty}^{tm}/pA_{\infty}^{tm} = A_{\infty}/pA_{\infty}$ and hence the residue ring ~$A_{\infty}^{tm}/pA_{\infty}^{tm}$ is perfect. It follows the image of ~$A_{\infty}^{tm}/pA_{\infty}^{tm} = A_{\infty}/pA_{\infty}$ under the Teichm{\"u}ller map lies in  ~$A_{\infty}^{tm}.$ But ~$A_{\infty}$ (in the ~$p$-adic topology) is topologically generated as a ~$\Z_p$-algebra by the image of ~$A_{\infty}/pA_{\infty}$ under the Teichm{\"u}ller map. We conclude ~$A_{\infty}^{tm} =A_{\infty}.$ 

Therefore, to show  ~$W(\sigma) = [\chi_f(\sigma)]_f^*,$ it is enough to show ~$W(\sigma(\{x_i\})) := \{\sigma(x_i)\}$ is equal to ~$[\chi_f(\sigma)]_f^*(\{x_i\})$ for all ~$i\in\Z^{+}.$ Fix a positive integer ~$i.$ By lemma \ref{power of a}, there exists an integer ~$N>0$ and a ~$p$-adic number ~$k$ such that ~$a^{\circ N} = u^{\circ k}.$ Hence, as ~$f(x) \equiv a(x^p) \mod p,$ it follows for every positive integer ~$m$ the element ~$u^{\circ -km}(x_{Nm+i})$ is a lift of ~$x_i^{p^{1/Nm}} \mod p.$ Similarly, ~$u^{\circ -km}([\chi_f(\sigma)]_f(x_{Nm+i}))$ is a lift of ~$(\sigma(x_i))^{p^{1/Nm}} \mod p.$ Using these lifts we will compare the action of ~$W(\sigma)$  and ~$[\chi_f(\sigma)]_f^*$ on ~$\{x_i\}.$ Observe 

\begin{align*}[\chi_f(\sigma)]_f^*(\{x_i\}) &= \lim_{m \rightarrow \infty} ([\chi_f(\sigma)]_f^*(u^{\circ -km}(x_{Nm+i})))^{p^{Nm}}
\\&=  \lim_{m \rightarrow \infty} (u^{\circ -km}([\chi_f(\sigma)]_f^*(x_{Nm+i})))^{p^{Nm}}
\\&=  \lim_{m \rightarrow \infty} (u^{\circ -km}([\chi_f(\sigma)]_f(x_{Nm+i})))^{p^{Nm}}
\\&= W(\sigma)(\{x_i\}). 
\end{align*} \end{proof}

\begin{remark} \emph{ The ring ~$A_{\infty}$ is not an unfamiliar object in the study of formal groups (or more generally ~$p$-divisible groups). If ~$f$ is the endomorphism of a formal group ~$F := \mathrm{Spf}(\Z_p\llbracket      x_1\rrbracket),$ the completion of  ~$A_{\infty}$ is the ring of global functions on the universal cover of F (see \cite[Section 3.1]{SW}).}  
\end{remark}

\subsection{The ~$p$-adic Regularity of ~$\chi_f$}\label{sect:regularity}

In this section, we will examine the image the sequence ~$\langle x_i \rangle$ under ~$\log_f.$ To ensure convergence, we will appeal to the following lemma of Berger: 

\begin{lemma}\label{Ber} \cite[Lemma 3.2]{B} Let ~$E$ be a finite extension of ~$\Q_p$ and take ~$L(X) \in E\llbracket      X\rrbracket.$ If ~$x \in \mathbf{B}_{dR}^{+},$ then the series ~$L(x)$ converges in ~$\mathbf{B}_{dR}^{+}$ if and only if ~$L(\theta(x))$ converges in ~$\C_p.$
\end{lemma}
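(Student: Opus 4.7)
The plan is to dispatch the easy direction by continuity of $\theta$, then handle the nontrivial direction by reducing to convergence in each finite-dimensional quotient $\mathbf{B}_{dR}^{+}/\ker(\theta)^N$ via a formal Taylor expansion around $\theta(x)$. The \emph{only if} direction is immediate: since $\theta\colon\mathbf{B}_{dR}^{+}\to\C_p$ is a continuous ring homomorphism, if $L(x)$ converges in $\mathbf{B}_{dR}^{+}$ to some $\ell$, then $L(\theta(x))=\theta(\ell)$ converges in $\C_p$.

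For the \emph{if} direction, use that $\mathbf{B}_{dR}^{+}\cong\varprojlim_{N}\mathbf{B}_{dR}^{+}/\ker(\theta)^N$, so it suffices to verify convergence in each quotient, which is a finite-dimensional $\C_p$-vector space and hence a $\C_p$-Banach space in its natural topology. Fix a uniformizer $t$ for the discrete valuation ring $\mathbf{B}_{dR}^{+}$ and a $\C_p$-linear section $\C_p\hookrightarrow\mathbf{B}_{dR}^{+}$, and decompose $x=y+z$ with $y:=\theta(x)$ and $z:=x-y\in\ker(\theta)=t\mathbf{B}_{dR}^{+}$. Expanding formally,
\begin{equation*}
\sum_{n\leq M}a_n x^n \;=\; \sum_{n\leq M}a_n(y+z)^n \;=\; \sum_{k\geq 0}\Bigg(\sum_{n\geq k}\tbinom{n}{k}a_n y^{n-k}\Bigg)z^k,
\end{equation*}
so modulo $\ker(\theta)^N$ only the $k<N$ terms contribute. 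Convergence of $L(x)$ in $\mathbf{B}_{dR}^{+}/\ker(\theta)^N$ thus reduces to convergence in $\C_p$ of each inner sum $L^{(k)}(y)/k!:=\sum_n\tbinom{n}{k}a_ny^{n-k}$ for $0\leq k<N$, together with the fact that $z^k$ lies in the finite-dimensional space $\ker(\theta)^k/\ker(\theta)^N$.

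The analytic input is the standard non-archimedean fact that a convergent power series over $\C_p$ has convergent formal derivatives at every point of its disk of convergence: from $|a_n y^n|_p\to 0$ and $|\tbinom{n}{k}|_p\leq 1$ one deduces $|\tbinom{n}{k} a_n y^{n-k}|_p\leq |y|_p^{-k}\,|a_n y^n|_p\to 0$ when $y\neq 0$, while the case $y=0$ is degenerate and handled directly since then only finitely many terms contribute to any given coefficient of the $t$-expansion. The \textbf{main obstacle} I anticipate is the careful reconciliation of topologies: the natural topology on $\mathbf{B}_{dR}^{+}$ is not the $\ker(\theta)$-adic topology but the finer one inherited from $\sB^+$ via both the $p$-adic and $\ker(\theta)$-adic filtrations, and one must verify that under the identification $\mathbf{B}_{dR}^{+}\cong\C_p\llbracket t\rrbracket$ this natural topology corresponds to coefficient-wise convergence in $\C_p$. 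Once this compatibility is confirmed, the truncation argument and Taylor expansion above combine to yield the result.
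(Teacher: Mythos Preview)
The paper does not prove this lemma; it is quoted from Berger \cite[Lemma~3.2]{B}, so there is no in-paper argument to compare against. Your overall strategy---pass to each quotient $\mathbf{B}_{dR}^+/\ker(\theta)^N$ and Taylor-expand $L(y+z)$ about a lift $y$ of $\theta(x)$ with $z\in\ker(\theta)$---is the natural one and is essentially Berger's.

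That said, the obstacle you isolate at the end is real, and the resolution you propose for it is false. You hope to verify that under the abstract isomorphism $\mathbf{B}_{dR}^+\cong\C_p\llbracket t\rrbracket$ the natural topology becomes coefficient-wise convergence in $\C_p$; but if that held, the inclusion of $\C_p$ as constant series would furnish a continuous $\Q_p$-algebra section of $\theta$, and a theorem of Colmez says no such section exists. Concretely, the step ``$\sum_n\binom{n}{k}a_n\,\theta(x)^{n-k}$ converges in $\C_p$, hence $\sum_n\binom{n}{k}a_n\,y^{n-k}$ converges in $\mathbf{B}_{dR}^+/\ker(\theta)^N$'' tacitly uses continuity of the section and is therefore unjustified as written.

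The repair is to choose the lift $y$ more carefully rather than via an arbitrary $\C_p$-section. Use the canonical embedding $\overline{\Q}_p\subset\mathbf{B}_{dR}^+$: when $\alpha:=\theta(x)\in\overline{\Q}_p$, take $y=\alpha$, so that every partial Taylor coefficient lies in the finite extension $E(\alpha)$, whose inclusion into the $\Q_p$-Banach space $\mathbf{B}_{dR}^+/\ker(\theta)^N$ is automatically continuous (any $\Q_p$-linear map out of a finite-dimensional $\Q_p$-space is bounded). For transcendental $\alpha$ one approximates by algebraic elements and controls the remainder; this requires some care. For the application in this paper only the case $x\in\sA^+$ with $\theta(x)\in\mathfrak m_{\C_p}$ is needed, where the argument is more direct.
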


By construction, the image of ~$A_{\infty}$ lies in ~$\sA^+$ and therefore ~$\theta(x_i) \in \mathcal{O}_{\C_p}.$ The map from ~$A_{\infty}$ to ~$\sA^+$ is a lift of the map ~$\phi_{\hat{\Pi}}: A_{\infty} \rightarrow \sE^+$ which sends ~$x_i$ to ~$\hat{\pi}_i :=  \langle a^{\circ k-i}(\pi_k) \mod p \rangle_{k \geq i}.$  Therefore, ~$\theta(x_i) \equiv \pi_i \mod p$ and hence ~$\theta(x_i) \in \mathfrak{m}_{\C_p}.$ Now the series ~$\log_f(X)$ converges on ~$\mathfrak{m}_{\C_p}$ and so we see from lemma \ref{Ber} that ~$\log_f(x_i)$ converges in ~$\mathbf{B}_{dR}^{+}$ for all ~$i\in\Z^+.$ 

Set ~$x_0 := f(x_1).$ Then ~$\theta(x_0) \in \mathfrak{m}_{\C_p}$ and hence ~$\log_f(x_0)$ converges in ~$\mathbf{B}_{dR}^{+}.$ We define ~$t_f := \log_f(x_0).$ We note that 
$$(f'(0))^{n+1} \log_f(x_n) = \log_f(f^{\circ n+1}(x_n)) = \log_f(f(x_1)) = t_f$$ and therefore the values ~$\log_f(x_n)$ are ~$\Q_p$-multiples of ~$t_f.$ We call ~$t_f$ a fundamental period of ~$f.$ The reader should be warned that ~$t_f$ depends not only on ~$f$ but also our choice of ~$f$-consistent sequence ~$\Pi$. If ~$f$ is an endomorphism of a formal group defined over ~$\Z_p$ any two fundamental periods differ by a ~$p$-adic unit.  

How does ~$G_K$ act on ~$t_f?$ Let ~$\sigma \in G_K,$ then 
\begin{align*} \sigma(t_f) &= \sigma \log_f(f(x_1)) 
\\&= \log_f(f(\sigma(x_1))) 
\\&= \log_f(f\circ[\chi_f(\sigma)]_f(x_1)) 
\\&= f'(0)\chi_f(\sigma)\log_f(x_1) 
\\&= \chi_f(\sigma)t_f
\end{align*} Therefore, assuming ~$t_f \neq 0,$ the period ~$t_f$ generates a ~$G_K$-stable ~$\Q_p$-line of ~$\mathbf{B}_{dR}^{+}$ on which ~$G_K$ acts through ~$\chi_f.$ From this it follows ~$\chi_f$ is de Rham of some positive weight. Our first proposition of this section shows that this is in fact the case.    

\begin{prop} ~$\chi_f$ is de Rham of weight 1. 
\end{prop}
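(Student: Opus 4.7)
The plan is to show $t_f = \log_f(x_0) \in \mathbf{B}_{dR}^+$ lies in $\mathrm{Fil}^1 \setminus \mathrm{Fil}^2$, which yields both nonvanishing and the weight one claim.

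The first move is to sharpen the congruence $\theta(x_i) \equiv \pi_i \pmod{p}$ to the exact equality $\theta(x_i) = \pi_i$. Since $x_i = f(x_{i+1})$ in $A_\infty$, the sequence $(\theta(x_i))_{i \geq 1}$ in $\mathcal{O}_{\mathbf{C}_p}$ is $f$-consistent, so the error $\delta_i := \theta(x_i) - \pi_i$ satisfies
\[
\delta_i = f'(\pi_{i+1})\,\delta_{i+1} + O(\delta_{i+1}^2).
\]
Each $v_p(f'(\pi_{i+1}))$ is strictly positive (bounded below on the Newton polygon) and tends to $v_p(f'(0)) = 1$ as $i \to \infty$, so iterating forces $v_p(\delta_i) \to \infty$, hence $\delta_i = 0$. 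In particular $\theta(x_0) = f(\pi_1) = 0$, and Berger's Lemma~\ref{Ber} gives $\theta(t_f) = \log_f(\theta(x_0)) = 0$, placing $t_f$ in $\mathrm{Fil}^1 \mathbf{B}_{dR}^+$.

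For the exact weight, the expansion $\log_f(X) = X + O(X^2)$ together with $x_0 \in \mathrm{Fil}^1$ gives $t_f \equiv x_0 \pmod{\mathrm{Fil}^2}$, so it suffices to show $x_0 \notin \mathrm{Fil}^2$. In the DVR $\mathbf{B}_{dR}^+$, with cyclotomic uniformizer $t$, write $x_0 \equiv c \cdot t \pmod{\mathrm{Fil}^2}$ for some $c \in \mathbf{C}_p$. From $\sigma(x_0) = [\chi_f(\sigma)]_f(x_0) \equiv \chi_f(\sigma)\, x_0 \pmod{\mathrm{Fil}^2}$ and $\sigma(t) = \chi_{\mathrm{cyc}}(\sigma)\,t$, one obtains $\sigma(c) = (\chi_f/\chi_{\mathrm{cyc}})(\sigma) \cdot c$ in $\mathbf{C}_p$. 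By Tate's theorem such a nonzero $c$ exists precisely when $\chi_f/\chi_{\mathrm{cyc}}$ is locally of finite order on $G_K$; otherwise the same argument at higher graded pieces would pick out $\chi_f/\chi_{\mathrm{cyc}}^k$ as locally finite order for some $k \geq 2$. Since $\chi_f$ takes values in $u'(0)^{\mathbf{Z}_p}$, Assumption~\ref{u assumption} ensures its Sen-theoretic derivative matches that of $\chi_{\mathrm{cyc}}$ to first order, so only $k = 1$ is compatible; hence $c \neq 0$ and $t_f \notin \mathrm{Fil}^2$. I expect this final step — excluding higher weights via a Sen-theoretic comparison between $\chi_f$ and the cyclotomic character — to be the technical crux of the argument, while the bootstrapping in the first step and the Taylor reduction are essentially formal.
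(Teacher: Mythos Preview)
Your first step is a valid alternative to the paper's and in fact establishes more: you prove $\theta(x_i)=\pi_i$ exactly, while the paper argues $t_f\in\mathrm{Fil}^1$ by contradiction (a weight-zero de Rham character would be potentially unramified, contradicting that $K_\infty/K$ is infinitely ramified) and postpones the computation $\theta(x_0)=0$ to the next proposition. Your iteration works because $f(x)\equiv a(x^p)\bmod p$ forces $f'(x)\in p\Z_p\llbracket x\rrbracket$, so in fact $v_p(f'(\pi_{i+1}))=1$ for \emph{every} $i$; your phrase ``tends to $v_p(f'(0))=1$ as $i\to\infty$'' is not the right justification (the $\pi_{i+1}$ do not approach $0$), but the uniform lower bound you need does hold.

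The second step has a genuine gap. You correctly reduce $t_f\notin\mathrm{Fil}^2$ to $x_0\notin\mathrm{Fil}^2$, and the Tate/Sen reformulation is fine: a nonzero image of $x_0$ in $\mathrm{gr}^k$ would force $\chi_f\chi_{\mathrm{cyc}}^{-k}$ to be $\C_p$-admissible, i.e.\ to have Sen weight zero. But your final move---that Assumption~\ref{u assumption} pins the Sen weight of $\chi_f$ to $1$---is unjustified. That assumption only says $u'(0)^{\Z_p}\cong\Z_p$ sits inside $1+p\Z_p$ with controlled $p$-adic growth; it encodes nothing about how $\chi_f$ compares with $\chi_{\mathrm{cyc}}$ on $G_K$. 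Computing the Sen weight of $\chi_f$ \emph{is} the content of the proposition, so as written the appeal is circular. To make this route work you would need an independent determination of the Sen weight, for instance through the ramification asymptotics of $K(\pi_n)/K$, and your sketch does not supply one.

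The paper avoids Sen theory entirely for this half: since $\theta(t_f)=0$ forces $\theta(x_0)$ to be a root of $\log_f$, and since every root of $\log_f$ in $\mathfrak{m}_{\C_p}$ is \emph{simple} (established earlier from the Newton polygon of $\log_f$), a first-order expansion in the square-zero thickening $\mathbf{B}_{dR}^+/\mathrm{Fil}^2$ of $\C_p$ shows that $\log_f(x_0)$ cannot vanish there. The analytic input ``$\log_f$ has only simple zeros'' is exactly what replaces your missing Sen-weight computation.
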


\begin{proof} The claim will follow if we can show ~$t_f \in \mathrm{Fil}^1 \mathrm{\mathbf{B}}_{dR}^{+}\setminus \mathrm{Fil}^2 \mathrm{\mathbf{B}}_{dR}^{+}.$ 

We begin by showing ~$t_f \in \mathrm{{Fil}}^1 \mathrm{\mathbf{B}}_{dR}^{+}.$ Assume this were not the case. Then ~$\chi_f$ would be Rham of weight 0. However, all such representations are potentially unramified and ~$K_{\infty},$ the fixed field of ~$\ker \chi_f,$ is an infinitely ramified ~$\Z_p$-extension of ~$K.$ It follows that ~$t_f \in \mathrm{Fil}^1 \mathrm{\mathbf{B}}_{dR}^{+}.$ 

Next we show ~$t_f \not\in \mathrm{{Fil}}^2 \mathrm{\mathbf{B}}_{dR}^{+}.$ Since ~$0 = \theta(t_f) = \log_f(\theta(x_0)),$ we observe ~$\theta(x_0)$ is a root of ~$\log_f.$ The quotient ~$\mathrm{\mathbf{B}}_{dR}^{+}/\mathrm{Fil}^2\mathrm{\mathbf{B}}_{dR}^{+}$ is a square zero extension of ~$\mathrm{\mathbf{B}}_{dR}^{+}/\mathrm{Fil}^1\mathrm{\mathbf{B}}_{dR}^{+} \cong \C_p.$ Since all roots of ~$\log_f$ in ~$\mathfrak{m}_{\C_p}$ are simple, we conclude ~$t_f \not\in \mathrm{{Fil}}^2 \mathrm{\mathbf{B}}_{dR}^{+}.$
\end{proof}

\begin{lemma}\label{dlog} The derivative of the logarithm, ~$\log_f',$ is an element of ~$\Z_p\llbracket      x\rrbracket.$
\end{lemma}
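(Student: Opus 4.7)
My plan is to express $\log_f'(x)$ as a coefficient-wise $p$-adic limit of approximants that are manifestly integral, exploiting the fact that $\log_f$ itself arises as such a limit. Recall from Section~\ref{sect: log and roots} that Lubin constructs $\log_f(x)$ as the coefficient-wise limit of $f^{\circ n}(x)/f'(0)^n$. Since formal differentiation commutes with coefficient-wise $p$-adic convergence---the $(n-1)$-th coefficient of $g'$ is simply $n$ times the $n$-th coefficient of $g$---it follows that
\[
\log_f'(x) \;=\; \lim_{n \to \infty} \frac{(f^{\circ n})'(x)}{f'(0)^n}
\]
coefficient-wise in $\Q_p$. Because $\Z_p \subset \Q_p$ is closed, the lemma reduces to showing that each approximant $(f^{\circ n})'(x)/f'(0)^n$ already lies in $\Z_p\llbracket x\rrbracket$.

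The key input is the mod-$p$ structure of $f$. As recorded in Section~\ref{sect:a ring}, Theorem~\ref{mod p lubin} (together with the Newton polygon of $f$) gives $f(x) \equiv a(x^p) \pmod{p}$ for some invertible $a \in S_0(\F_p)$. Writing $f(x) = \sum a_n x^n$, this congruence forces $p \mid a_n$ whenever $p \nmid n$, and consequently the coefficient $n a_n$ of $x^{n-1}$ in $f'(x)$ is divisible by $p$---either $n$ or $a_n$ supplies the factor. Thus $f'(x) \in p\,\Z_p\llbracket x\rrbracket$.

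Combining this with the chain rule finishes the argument: the identity
\[
(f^{\circ n})'(x) \;=\; \prod_{i=0}^{n-1} f'\!\bigl(f^{\circ i}(x)\bigr)
\]
shows $(f^{\circ n})'(x) \in p^n\, \Z_p\llbracket x\rrbracket$, and since $v_p(f'(0)) = 1$ forces $f'(0)^n = p^n \cdot u$ for some $u \in \Z_p^{\times}$, the quotient lies in $\Z_p\llbracket x\rrbracket$ as required.

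I do not anticipate a serious obstacle; the lemma is essentially formal once the mod-$p$ structure of $f$ is at hand. The only step that does genuine work is the extraction $f'(x) \in p\,\Z_p\llbracket x\rrbracket$, which is precisely where the dynamical hypotheses on $(f,u)$ enter, through Lubin's mod-$p$ theorem. The interchange of limit and derivative is immediate for coefficient-wise convergence, so no analytic subtlety arises.
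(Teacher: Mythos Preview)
Your proof is correct and takes a genuinely different route from the paper's. The paper argues by contradiction via roots: if $\log_f'$ had a root $\pi \in \mathfrak{m}_{\C_p}$, then differentiating the identity $\log_f \circ u = u'(0)\log_f$ shows that the root set of $\log_f'$ is $u$-stable; since $u$ preserves valuations and $\log_f'$ has only finitely many roots of each valuation, some iterate of $u$ fixes $\pi$, whence $\pi \in \Lambda_u = \Lambda_f$ is a root of $\log_f$---contradicting the simplicity of the roots of $\log_f$. Your argument instead checks integrality directly on the approximants $(f^{\circ n})'/f'(0)^n$, the pivot being that $f \equiv a(x^p) \pmod{p}$ forces $f' \in p\,\Z_p\llbracket x\rrbracket$, so the chain rule gives $(f^{\circ n})' \in p^n\,\Z_p\llbracket x\rrbracket$. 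Your approach is shorter and avoids both the dynamical fact $\Lambda_u = \Lambda_f$ and the root analysis of $\log_f$, trading these for the mod-$p$ structure of $f$ supplied by Theorem~\ref{mod p lubin}---a fact the paper has already invoked in Section~\ref{sect:a ring} but which its own proof of this lemma does not use.
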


\begin{proof} Since the series ~$\log_f$ converges on ~$\mathfrak{m}_{\C_p},$ so too does ~$\log_f'.$ The Newton polygon of ~$\log_f'$ contains the point ~$(0,0).$ Hence, ~$\log_f'\in \Z_p\llbracket      x\rrbracket$ if and only if ~$\log_f'$ has no roots in ~$\mathfrak{m}_{\C_p}.$ Assume for the sake of contradiction ~$\log_f'$ had a root ~$\pi$ in ~$\mathfrak{m}_{\C_p}.$ Then \begin{equation}\label{eqn : derivative} \log_f'(u(\pi))u'(\pi) = 
(\log_f \circ u)'(\pi) = u'(0)\log_f'(\pi) = 0.
\end{equation} The power series ~$u'(x)$ is invertible in ~$\Z_p\llbracket      x\rrbracket,$ so equation \ref{eqn : derivative} implies  ~$\log_f'(u(\pi)) = 0.$ Hence, ~$u^{\circ \Z_p}$ acts on the set of roots of ~$\log_f'$. Since ~$u$ preserves valuation and ~$\log_f'$ has only finitely many roots of any fixed valuation, some power of ~$u$ fixes ~$\pi.$ From the equality ~$\Lambda_u = \Lambda_f,$ we conclude the element ~$\pi$ is a root of ~$\log_f$. But this is a contradiction as  ~$\log_f$ has simple roots.  
\end{proof}

\begin{prop} The period ~$t_f$ is an element of ~$\mathbf{A}_{cris},$ and ~$\chi_f$ is crystalline. 
\end{prop}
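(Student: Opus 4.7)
The plan is to use Lemma~\ref{dlog} to rewrite $t_f$ as a $p$-adically convergent series of divided powers of $x_0$, and then deduce crystallinity formally.

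The first step is to observe that $x_0 \in \ker(\theta) \cap \sA^+$. Indeed, since $\theta$ is a ring homomorphism and $\theta(x_1) = \pi_1$ (this follows from the construction $\phi_{\hat{\Pi}}$ combined with the fact that the Witt lift of $\hat{\pi}_1$ maps under $\theta$ to $\pi_1$), we have
$$\theta(x_0) = \theta(f(x_1)) = f(\theta(x_1)) = f(\pi_1) = 0,$$
because $\pi_1$ is a root of $f$ by the choice of the $f$-consistent sequence $\Pi$. Consequently the divided powers $x_0^{[n]} := x_0^n/n!$ all lie in $\mathbf{A}_{cris}$ by the PD structure on $\ker(\theta)$.

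The second step is to integrate the integral series for $\log_f'$. By Lemma~\ref{dlog}, $\log_f'(x) = \sum_{n \geq 0} b_n x^n$ with $b_n \in \Z_p$, so formal integration (using $\log_f(0) = 0$) yields $\log_f(x) = \sum_{n \geq 1} (b_{n-1}/n)\, x^n$. Substituting $x_0$ and rearranging,
$$t_f = \log_f(x_0) = \sum_{n \geq 1} \frac{b_{n-1}}{n}\, x_0^n = \sum_{n \geq 1} b_{n-1}(n-1)! \cdot x_0^{[n]}.$$
The coefficients $b_{n-1}(n-1)!$ lie in $\Z_p$ and their $p$-adic valuations satisfy $v_p(b_{n-1}(n-1)!) \geq v_p((n-1)!) \to \infty$. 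Because $\mathbf{A}_{cris}$ is $p$-adically complete and each $x_0^{[n]} \in \mathbf{A}_{cris}$, the series converges in $\mathbf{A}_{cris}$, so $t_f \in \mathbf{A}_{cris}$.

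Finally, since we have already computed that $\sigma(t_f) = \chi_f(\sigma) t_f$ for every $\sigma \in G_K$, the line $\Q_p \cdot t_f \subset \mathbf{B}_{cris}$ realizes $\Q_p(\chi_f)$ as a subrepresentation of $\mathbf{B}_{cris}$. This yields a nonzero element of $\mathbf{D}_{cris}^*(\Q_p(\chi_f))$, and combined with Fontaine's general inequality $\dim_{K_0} \mathbf{D}_{cris}^*(V) \leq \dim_{\Q_p} V$, we conclude that the one-dimensional representation $\chi_f$ is crystalline.

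I do not foresee any significant obstacle: the integrality of $\log_f'$ is precisely the content of the preceding lemma, the computation $\theta(x_0) = 0$ is immediate from $f(\pi_1) = 0$, and the $p$-adic convergence of $b_{n-1}(n-1)!$ to zero is the standard feature that makes the classical logarithm land in $\mathbf{A}_{cris}$ after exponentiation-style manipulation. The only mildly delicate point is recognizing that writing $\log_f$ via its derivative converts the apparently problematic denominators $1/n$ into the harmless factorials $(n-1)!$ that pair with the divided powers of $x_0$.
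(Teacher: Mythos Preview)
Your overall strategy matches the paper's: use Lemma~\ref{dlog} to write $\log_f(x)=\sum a_n x^n/n$ with $a_n\in\Z_p$, rewrite $t_f$ as $\sum a_n(n-1)!\,x_0^{[n]}$, and conclude $t_f\in\mathbf{A}_{cris}$ once $x_0\in\ker(\theta)\cap\sA^+$. The divided-power manipulation and the deduction of crystallinity are exactly as in the paper.

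The gap is in your first step. You assert $\theta(x_1)=\pi_1$, claiming this ``follows from the construction $\phi_{\hat{\Pi}}$ combined with the fact that the Witt lift of $\hat{\pi}_1$ maps under $\theta$ to $\pi_1$.'' Neither half of this is justified. The embedding $A_\infty\hookrightarrow\sA^+$ is obtained by applying the Witt functor to $\phi_{\hat\Pi}$, but $x_1\in A_\infty$ is \emph{not} the Teichm\"uller representative of $\bar x_1$ (the paper computes $\{\bar x_i\}$ explicitly via limits of the form $\lim_m(u^{\circ -km}(x_{Nm+i}))^{p^{Nm}}$, which is not $x_i$). So even if one knew $\theta(\{\hat\pi_1\})=\pi_1$, this would not give $\theta(x_1)=\pi_1$. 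All the construction yields directly is $\theta(x_i)\equiv\pi_i\bmod p$, which is precisely what the paper records just before this proposition.

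The paper closes this gap differently: it invokes the \emph{previous} proposition, which shows $t_f\in\mathrm{Fil}^1\mathbf{B}_{dR}^+$. Hence $\log_f(\theta(x_0))=\theta(t_f)=0$, so $\theta(x_0)$ is a root of $\log_f$. Combining this with $\theta(x_0)\equiv f(\pi_1)=0\bmod p$ and the fact that $0$ is the unique root of $\log_f$ of valuation $\geq 1$ forces $\theta(x_0)=0$. Your argument can be repaired by inserting exactly this step in place of the unjustified claim $\theta(x_1)=\pi_1$; the rest then goes through as you wrote it.
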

\begin{proof} By lemma \ref{dlog}:  ~$$\log_f(x) = \sum_{i=1}^{\infty} \frac{a_n}{n} x^n$$ where ~$a_n \in \Z_p$ for all ~$n\in \Z^+.$ Therefore,  ~$$t_f = \sum_{i=1}^{\infty} a_n(n-1)!\frac{x_0^n}{n!}.$$ It follows if ~$\theta(x_0) = 0,$ then ~$t_f \in \mathbf{A}_{cris}.$  To see ~$\theta(x_0) = 0,$ note that ~$\theta(x_0)$ is a root of ~$\log_f$ in ~$\mathfrak{m}_{\C_p}$ and ~$$\theta(x_0) \equiv \theta(f(x_1))  \equiv f(\pi_1) \equiv 0 \mod p.$$ As ~$0$ is the unique root of ~$\log_f$ of ~$p$-adic valuation greater than ~$1,$ the element ~$\theta(x_0) = 0$ and the claim follows. \end{proof}

\section{The Main Theorem}\label{the main theorem}

\subsection{Constructing the Formal Group}\label{sect:formal group} Let ~$V_f$ be the ~$\Q_p$-line of ~$\mathbf{B}_{cris}^+$ generated by the period ~$t_f.$ In section \ref{sect:regularity}, we deduced that ~$V_f$ is a ~$G_K$-sub-representation of ~$\mathbf{B}_{cris}^+$ isomorphic to ~$\chi_f$. Since ~$K/\Q_p$ is totally ramified, any ~$G_K$-representation on a vector space ~$V/\Q_p$ has multiplicity in ~$\mathbf{B}_{cris}^+$ at most one. Therefore, ~$V_f$ is the unique ~$\Q_p$-line of ~$\mathbf{B}_{cris}^+$ isomorphic to ~$\chi_f.$ It follows ~$V_f$ is preserved by crystalline Frobenius. Let ~$\pi_f \in \Q_p$ be the eigenvalue of crystalline Frobenius acting on ~$V_f.$ Because ~$V_f$ has weight one, the ~$p$-adic valuation of ~$\pi_f$ is equal to ~$1.$ 

The next lemma is fundamental. We will show that the equality ~$\varphi(\log_f(x_0)) = \pi_f\log_f(x_0)$ implies that ~$\varphi(x_0) = [\pi_f]_f(x_0).$ This will be enough to show ~$[\pi_f]_f \in S_0(\Z_p)$ and satisfies ~$[\pi_f]_f(x) \equiv x^p \mod p,$ from which we will deduce the that ~$f$ and ~$u$ are endomorphisms of an integral formal group by a lemma of Lubin and Tate. 

\begin{lemma} The power series ~$[\pi_f]_f \in \pi_fx + x^2\Z_p\llbracket      x\rrbracket$ and satisfies ~$[\pi_f]_f(x) \equiv x^p \mod p.$
\end{lemma}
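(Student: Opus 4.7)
The plan is to split the proof into two stages: first establish the identity $\varphi(x_0) = [\pi_f]_f(x_0)$ in $\mathbf{B}_{cris}^+$ by exponentiating the Frobenius-eigenvalue relation, then translate this identity into coefficient-level integrality of $[\pi_f]_f$.

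For the first stage, since $\log_f'(0) = 1$ the compositional inverse $\exp_f \in X + X^2\Q_p\llbracket X\rrbracket$ exists. By Lemma \ref{Ber} applied with $\theta(t_f) = 0$, the expression $\exp_f(t_f)$ converges in $\mathbf{B}_{dR}^+$, and the formal identity $\exp_f\circ\log_f = \mathrm{id}$ together with $\log_f(x_0) = t_f$ gives $\exp_f(t_f) = x_0$. Applying $\varphi$, which fixes the $\Q_p$-coefficients of $\exp_f$ and sends $t_f$ to $\pi_f t_f$,
\[ \varphi(x_0) \;=\; \varphi(\exp_f(t_f)) \;=\; \exp_f(\pi_f t_f) \;=\; \exp_f(\pi_f \log_f(x_0)) \;=\; [\pi_f]_f(x_0). \]
Since $\varphi$ preserves $A_\infty$ via the identification $A_\infty = W(A_\infty/pA_\infty)$, the common value lies in $A_\infty$, and the Witt-vector congruence $\varphi(y)\equiv y^p \pmod p$ on $\sA^+$ gives $[\pi_f]_f(x_0) - x_0^p \in pA_\infty$. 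Thus the series $h(X) := ([\pi_f]_f(X) - X^p)/p \in \Q_p\llbracket X\rrbracket$, which still converges on $\mathfrak{m}_{\C_p}$, satisfies $h(x_0) \in A_\infty \subseteq \sA^+$.

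The second stage reduces to the following key claim: if $\psi \in \Q_p\llbracket X\rrbracket$ converges on $\mathfrak{m}_{\C_p}$ and $\psi(x_0) \in \sA^+$, then $\psi \in \Z_p\llbracket X\rrbracket$. Applied to $\psi = h$, this gives $[\pi_f]_f(X) = X^p + ph(X) \in X^p + p\Z_p\llbracket X\rrbracket$, yielding both $[\pi_f]_f \in \pi_f X + X^2\Z_p\llbracket X\rrbracket$ and $[\pi_f]_f(X) \equiv X^p \pmod p$ at once. My argument for the claim is a clearing-denominators maneuver: if $-m := \inf_n v_p(c_n)$ is finite and strictly negative, then $p^m\psi \in \Z_p\llbracket X\rrbracket$ has at least one unit coefficient, so $\overline{p^m\psi} \neq 0$ in $\F_p\llbracket X\rrbracket$; but $p^m\psi(x_0) \in p^m\sA^+ \subseteq p\sA^+$ reduces to $\overline{p^m\psi}(\bar x_0) = 0$ in $\sE^+$, contradicting the transcendence of $\bar x_0$ over $\F_p$. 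This transcendence follows from the injection $\F_p\llbracket X\rrbracket \hookrightarrow A_\infty/pA_\infty \hookrightarrow \sE^+$, $X \mapsto \bar x_0$, which is valid because $v_{\sE^+}(\bar x_0) = p/(p-1) > 0$ so the $\bar x_0$-adic valuations of distinct monomials are distinct.

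The main obstacle is ruling out the pathological case $\inf_n v_p(c_n) = -\infty$, where the clearing-denominators step cannot be carried out uniformly. I would handle this either by a refined induction on the coefficient index---showing inductively that $c_n \in \Z_p$ by writing the remainder $R_n := \psi(x_0) - \sum_{k < n}c_k x_0^k \in \sA^+$ formally as $x_0^n\eta(x_0)$ with $\theta(\eta(x_0)) = c_n$ and arguing $\eta(x_0) \in \sA^+$ from the structure of $x_0$ in the Witt-vector ring---or, for the specific $\psi = h$ at hand, by exploiting the formula $[\pi_f]_f(X) = \exp_f(\pi_f \log_f(X))$ together with the bound $v_p(\text{$n$-th coefficient of } \log_f) \geq -v_p(n)$ from Lemma \ref{dlog}, analogous bounds on $\exp_f$, and the compensating growth $v_p(\pi_f^n) = n$ to force the denominators of $h$ to be uniformly bounded.
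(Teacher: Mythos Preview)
Your first stage is essentially the paper's, but the displayed chain $\varphi(x_0) = \varphi(\exp_f(t_f)) = \exp_f(\pi_f t_f)$ has a subtle gap: Lemma~\ref{Ber} only gives convergence of $\exp_f(t_f)$ in $\mathbf{B}_{dR}^+$, where $\varphi$ does not act, so commuting $\varphi$ through the infinite sum is not justified as written. The paper handles this by first checking $\theta(\varphi(x_0)) = 0$ directly, then computing $\log_f(\varphi(x_0)) = \varphi(t_f)$ via the $p$-adic convergence of $t_f = \sum a_n(n-1)!\cdot x_0^n/n!$ in $\mathbf{A}_{cris}$ (where $\varphi$ is $p$-adically continuous), and only afterwards applying $\log_f^{-1}$ in $\mathbf{B}_{dR}^+$. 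This is a minor repair.

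The genuine gap is in the second stage. You assert that $h$ converges on $\mathfrak{m}_{\C_p}$ with no justification, and your key claim then needs the coefficient valuations of $h$ to be bounded below---which you correctly flag as the main obstacle but do not resolve. Your fix~1 is not available as sketched: the remainder $R_n$ lies in $\sA^+ \cap \mathrm{Fil}^n$, but $x_0$ is \emph{not} a generator of $\ker\theta \cap \sA^+$ (its reduction in $\sE^+$ has valuation $p/(p-1) > 1$), so one cannot factor $R_n = x_0^n\eta$ with $\eta \in \sA^+$. Your fix~2 does not go through termwise either: one can indeed prove $v_p(e_m) \geq -v_p(m!)$ from $\log_f' \in \Z_p\llbracket X\rrbracket^\times$, but already the single summand $e_1\pi_f\ell_{p^j}$ contributing to the $X^{p^j}$-coefficient of $[\pi_f]_f$ has valuation bounded only by $1 - j$, so any argument must exploit cancellation among terms---which is tantamount to proving integrality by other means. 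The paper bypasses all of this with a one-coefficient-at-a-time induction using the commutation $f \circ [\pi_f]_f = [\pi_f]_f \circ f$: the $x^n$-coefficient of $f([\pi_f]_f^{<n}(x)) - [\pi_f]_f^{<n}(f(x))$ equals $a_n(f'(0) - f'(0)^n)$, the period-ring input $[\pi_f]_f^{<n}(x_0) \equiv x_0^p$ modulo $(\mathrm{Fil}^n \cap \sA^+) + p\sA^+$ shows this coefficient lies in $p\Z_p$, and since $v_p(f'(0) - f'(0)^n) = 1$ one gets $a_n \in \Z_p$. No global denominator bound is ever needed.
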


\begin{proof} First note that by construction ~$x_0 \in \sA^+ \subseteq \mathbf{B}_{cris}^+$ and so ~$x_0$ is acted upon by crystalline Frobenius. Consider ~$\varphi(x_0).$ We claim ~$\theta(\varphi(x_0))= 0.$ To see this, observe  ~$$\log_f(\theta(\varphi(x_0))) = \theta(\varphi(\log_f(x_0)) = \theta(\pi_ft_f) = 0.$$ Hence, ~$\theta(\varphi(x_0))$ is a root of ~$\log_f(x).$ As ~$x_0 \in  \sA^+$ and ~$\theta(x_0)=0,$ the image of crystalline Frobenius satisfies ~$\theta(\varphi(x_0)) \in p\mathcal{O}_{\C_p}.$ The unique root of ~$\log_f(x)$ in ~$p\mathcal{O}_{\C_p}$ is ~$0,$ therefore we conclude ~$\theta(\varphi(x_0))= 0.$

Because ~$\theta(x_0) = 0$ and ~$\theta(\varphi(x_0))= 0,$ any series ~$e(x)\in \Q_p\llbracket      x\rrbracket$ converges in ~$\mathrm{\mathbf{B}}_{dR}^{+}$ when evaluated at ~$x_0$ or ~$\varphi(x_0).$  From this we deduce the equality ~$$\log_f(\varphi(x_0)) = \varphi(t_f) = \pi_ft_f = \log_f([\pi_f]_f(x_0)).$$ Additionally, we may apply ~$\log_f^{-1}$ to both sides, obtain convergent results and deduce ~$\varphi(x_0)= [\pi_f]_f(x_0).$ 

Write ~$$[\pi_f]_f(x) \in \pi_fx + \sum_{i=2}^{\infty} a_nx^n$$ where ~$a_n \in \Q_p.$ We prove by induction on ~$n$ that ~$a_n \in \Z_p.$ Let ~$n\geq 2$ and assume ~$a_i \in \Z_p$ for all ~$i<n.$ Set ~$$[\pi_f]_f^{<n} := \pi_fx + \sum_{i=2}^{n-1} a_nx^n.$$ Observe the equality
$$[\pi_f]_f^{<n}(x_0) \equiv \varphi(x_0) \equiv x_0^p \mod Fil^{n}\mathbf{B}_{dR}^+ \cap \sA^+ + p\sA^+.$$ Hence, as the homomorphism ~$\Z_p\llbracket      x\rrbracket \rightarrow \sA^+$ given by mapping ~$x \mapsto x_0$ is injective and ~$(x)$ is the pullback of ~$Fil^{n}\mathbf{B}_{dR}^+ \cap \sA^+,$ it must be the case that ~$[\pi_f]_f^{<n}(x) \equiv x^p \mod p.$ Express ~$f(x) = f'(0)x + f^{\ge 2}(x)$ where ~$f^{\ge 2}(x) \in x^2\Z_p\llbracket      x\rrbracket.$ Note that$$ f\circ[\pi_f]_f(x) \equiv f([\pi_f]_f^{<n}(x)) + a_nf'(0)x^n \mod x^{n+1}$$ and ~$$ [\pi_f]_f(f(x)) \equiv [\pi_f]_f^{<n}(f(x)) + a_nf'(0)^nx^n \mod x^{n+1}.$$ As ~$f(x)$ and ~$[\pi_f](x)$ commute, 
\begin{equation} \label{eqn : stuff} f([\pi_f]_f^{<n}(x)) - [\pi_f]_f^{<n}(f(x)) \equiv a_n(f'(0) -f'(0)^n)x^n \mod x^{n+1} \end{equation}
The left hand side of \eqref{eqn : stuff} is a power series over ~$\Z_p$ and reduces modulo ~$p$ to ~$$f([\pi_f]_f^{<n}(x)) - [\pi_f]_f^{<n} \equiv f(x^p) -f(x)^p \equiv 0 \mod p.$$ Therefore, the right hand side of \eqref{eqn : stuff} is a power series over ~$p\Z_p.$ Since ~$v_p(f'(0) -f'(0)^n) = 1,$ it follows ~$a_n \in \Z_p.$ By induction we conclude ~$[\pi_f]_f$ is an element of ~$\pi_fx + x^2\Z_p\llbracket      x\rrbracket.$ 
From the equality ~$\varphi(x_0)= [\pi_f]_f(x_0),$ we observe ~$[\pi_f]_f(x) \equiv x^p \mod p.$ \end{proof}

\samepage{ \begin{theorem} Let ~$f$ and ~$u$ be a commuting pair of elements in ~$S_0(\Z_p)$. If
\begin{itemize}
\item ~$f'(0)$ is prime in ~$\Z_p$ and ~$f$ has exactly ~$p$ roots in ~$\mathfrak{m}_{\C_p},$
\item ~$u$ is invertible and has infinite order,
\end{itemize} 
then there exists a unique formal group law ~$F/\Z_p$ such that ~$f,u \in \mathrm{End}_{\Z_p}(F).$ The formal group law ~$F$ is isomorphic to ~$\widehat{\G}_m$ over the ring of integers of the maximal unramified extension of ~$\Q_p.$       
\end{theorem}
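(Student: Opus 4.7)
The plan is that the preceding lemma has furnished essentially all the data required: we have a power series $[\pi_f]_f \in \pi_f x + x^2\Z_p\llbracket x\rrbracket$ whose linear term $\pi_f$ is a uniformizer of $\Z_p$ and which satisfies $[\pi_f]_f(x) \equiv x^p \pmod{p}$. The Lubin--Tate integrality criterion (item \ref{itm:Lubin-Tate} of the introduction, applied with height $h=1$) therefore produces a unique formal group law $F/\Z_p$ such that $[\pi_f]_f \in \End_{\Z_p}(F)$. I claim this $F$ is the desired formal group, and the remainder of the proof is a short bookkeeping exercise.

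First I would verify that $f$ and $u$ are in $\End_{\Z_p}(F)$. By item (3) of the introduction, $u$ determines a formal group $F_u/\Q_p$ with $f,u \in \End_{\Q_p}(F_u)$. The series $[\pi_f]_f$ was defined as the unique element of $u^{\circ \Z_p}$ with linear term $\pi_f$, so it also lies in $\End_{\Q_p}(F_u)$. By the uniqueness clause of Lubin--Tate applied over $\Q_p$ to the series $[\pi_f]_f$, we conclude $F \otimes_{\Z_p} \Q_p \cong F_u$. Since $f$ and $u$ already belong to $S_0(\Z_p)$ and are endomorphisms of $F$ after base change to $\Q_p$, they automatically belong to $\End_{\Z_p}(F)$.

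For uniqueness of $F$, suppose $F'/\Z_p$ is another formal group with $f,u \in \End_{\Z_p}(F')$. Then $F'\otimes\Q_p$ admits $u$ as an endomorphism, so $F'\otimes\Q_p \cong F_u$, and the endomorphism $[\pi_f]_f$ — which is intrinsically determined as the unique element of $u^{\circ \Z_p}$ with linear term $\pi_f$ — is transported to an element of $\End_{\Z_p}(F')$. The uniqueness statement in item \ref{itm:Lubin-Tate} then forces $F' = F$.

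Finally, the isomorphism of $F$ with $\widehat{\G}_m$ over the ring of integers of the maximal unramified extension of $\Q_p$ is a classical fact about height one formal groups: any two Lubin--Tate formal groups over $\Z_p$ attached to the same uniformizer are isomorphic over $\Z_p$, and the multiplicative formal group $\widehat{\G}_m$ is Lubin--Tate over the unramified closure for the uniformizer $p$ (after an unramified twist identifying $\pi_f$ with $p$). The real content of the theorem is the existence of the Lubin--Tate endomorphism $[\pi_f]_f$, which is exactly what the crystalline period construction of the preceding sections produces; once that is in hand, the theorem reduces to a direct appeal to the Lubin--Tate lemma.
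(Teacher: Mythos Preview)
Your overall strategy matches the paper's: apply the Lubin--Tate criterion to $[\pi_f]_f$ to produce $F/\Z_p$, then observe that $f$ and $u$, being integral series commuting with $[\pi_f]_f$, are automatically endomorphisms of $F$.

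There is, however, a misstatement that you should correct. You write that ``$[\pi_f]_f$ was defined as the unique element of $u^{\circ \Z_p}$ with linear term $\pi_f$,'' and you repeat this in the uniqueness paragraph. This is impossible: every element of $u^{\circ \Z_p}$ is invertible with linear term a unit, whereas $v_p(\pi_f)=1$. You are conflating the notation $[\chi_f(\sigma)]_f$ (which \emph{is} a $\Z_p$-iterate of $u$) with $[\pi_f]_f$, which is instead the endomorphism of the formal group $F_u/\Q_p$ with derivative $\pi_f$ at the origin, i.e.\ $[\pi_f]_f = \log_f^{-1}(\pi_f \cdot \log_f)$. The relevant property is not membership in $u^{\circ \Z_p}$ but simply that $[\pi_f]_f$ commutes with $f$ and $u$ (all three share the logarithm $\log_f$). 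With that correction your argument goes through: the full Lubin--Tate lemma \cite[Theorem~1.1]{LT} asserts not only the existence of $F$ but that every integral series commuting with $[\pi_f]_f$ lies in $\End_{\Z_p}(F)$, which is how both you and the paper place $f$ and $u$ there. Your uniqueness and $\widehat{\G}_m$ paragraphs are fine once the same correction is made.
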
}

\begin{proof} By \cite[Theorem~1.1]{LT}, the series ~$[\pi_f]_f$ is an endomorphism of a unique height one formal group over ~$F/\Z_p.$ Furthermore, ~$f$ and ~$u$ are endomorphisms of ~$F$.  The second half of the claim follows as all height one formal groups over ~$\Z_p$ are forms of ~$\G_m$ and are trivialized over the ring of integers of the maximal unramified extension of ~$\Q_p.$       
\end{proof} 

\section{Acknowledgements}
I would like to thank Frank Calegari, Vlad Serban, and Paul VanKoughnett for their helpful edits of an earlier draft of this paper.

\end{document}